\newtheorem{thm}{Theorem}[section]
\newtheorem{cor}[thm]{Corollary}
\newtheorem{lem}[thm]{Lemma}
\theoremstyle{definition}
\newtheorem{defn}[thm]{Definition}
\theoremstyle{remark}
\newtheorem{rem}[thm]{Remark}
\newtheorem{ex}[thm]{Example}
\numberwithin{equation}{section}
\begin{document}


\title[A study of Kirk's asymptotic contractions]
{A Study of Kirk's Asymptotic Contractions via Leader Contractions}

\author[Hassan Khandani]{Hassan Khandani}
\address{%
Department of Mathematics \\
Mah. C, Islamic Azad University \\
Mahabad, Iran}
\email{hassan.khandani@iau.ac.ir, khandani.hassan@gmail.com}


\date{\today}

\begin{abstract}
This paper investigates asymptotic fixed point results for nonlinear contractions, focusing particularly on Kirk’s asymptotic fixed point theorem and its generalizations. A key technical challenge in the existing literature has been the assumption that the mapping must have a bounded orbit—a condition often difficult to verify and widely considered essential for establishing fixed point existence.

Our primary contribution is to remove this boundedness assumption by showing that every asymptotic Kirk contraction is, in fact, a Leader contraction, which inherently ensures boundedness of orbits. This insight simplifies fixed point arguments and expands the class of applicable mappings.

Additionally, we address an open conjecture by proving that the standard upper semicontinuity condition imposed on the control function \(\phi\) can be relaxed to right-upper semicontinuity, resolving a question posed by Jachymski et al.

Together, these advances unify and extend several classical results—including those of Boyd–Wong, Kirk, Chen, Arav et al., and Reich and Zaslavski—within the broader framework of Leader contractions. Our findings enrich the theory of nonlinear fixed point iterations by providing simpler, more practical conditions for convergence and fixed point existence in metric-type spaces.
\end{abstract}

\subjclass{Primary 47H09; Secondary 47H10}
\keywords{$b$-metric spaces, non-expansive mappings, Leader contractions, Meir-Keeler contractions, Matkowski contractions.}

\maketitle

\section{Introduction}

The Boyd–Wong fixed point theorem is a notable generalization of the classical Banach contraction principle. It concerns $\phi$-contractions in which the contractive condition is governed by a function $\phi$ that is upper semicontinuous from the right (see Theorem~\ref{thm:boyd-wong}). An important extension of this result was proposed by W.~A.~Kirk, now widely known as the \emph{asymptotic fixed point theorem} (see Theorem~\ref{thm:kirk}). However, Kirk's original proof—based on the ultrapower technique—has been considered technically demanding and less applicable in concrete settings \cite{kirk2003}.

Later, Chen reformulated Kirk’s theorem to better resemble an asymptotic version of the Boyd–Wong theorem. His approach relied on more transparent techniques and relaxed assumptions: in particular, the function $\phi$ was only required to be upper semicontinuous (not continuous), and the operator $T$ was assumed to have a bounded orbit (see Corollary 2.4 in \cite{chen2005asymptotic}). To ensure that the limit of the Picard sequence is a fixed point, he also required that $\phi_{n_*}(0) = 0$ for some $n_* \in \mathbb{N}$ and that $\phi_{n_*}$ is upper semicontinuous.

Because verifying the boundedness of orbits can be difficult in practice, several authors have sought alternative hypotheses that imply boundedness automatically. This effort began with Browder \cite{browder1968}, who identified the following asymptotic condition on $\phi$ (also discussed by Matkowski \cite{matkowski1975integrable}):
\[
\lim_{t \to \infty} (t - \phi(t)) = \infty.
\]
Jachymski and Jóźwik applied this idea to the framework of asymptotic contractions \cite{jachymski2004}, showing that if
\[
\Delta(T^{k}x, T^{k}y) \leq \phi(\Delta(x, y)) \quad \text{for all } x, y \in X,
\]
holds for some $k \in \mathbb{N}$ and a non-decreasing function $\phi: \mathbb{R}^+ \to \mathbb{R}^+$, then all orbits of $T$ are bounded (see Lemma 3 in \cite{jachymski2004}).

Arav et al.\ \cite{arav2007} extended Kirk’s result further by requiring the sequence $\{\phi_n\}$ to converge uniformly to $\phi$ on bounded intervals. Reich and Zaslavski \cite{reich2008} later relaxed several of these assumptions: they allowed convergence along a subsequence and replaced the upper semicontinuity of $\phi_{n_*}$ with the continuity of $T^n$ for some $n$. Nonetheless, their results still relied on the assumption that $T$ has a bounded orbit.

An earlier contribution by Leader introduced a broader class of contractions, known as \emph{Leader contractions}, which unify various contractive types, including those of Boyd–Wong, Meir–Keeler, and Matkowski \cite{leader1983equivalent}. A graphic illustration of these interrelations is presented in \cite{jachymski2011equivalent}. It is known that every Leader contraction has a unique fixed point, and the Picard sequence converges to it. Consequently, all orbits are automatically bounded.

\medskip

\noindent\textbf{Our contributions.} The main goal of this paper is to provide a unified extension of Kirk's asymptotic fixed point theorem using the Leader contraction framework. Specifically:
\begin{itemize}
    \item We relax the assumption that $\phi$ is upper semicontinuous, replacing it with the weaker condition that $\phi$ is upper semicontinuous from the right.
    \item We prove that every asymptotic Kirk contraction is a Leader contraction. This observation allows us to eliminate the often-imposed assumption that $T$ must have a bounded orbit.
\end{itemize}

\medskip

\noindent\textbf{Organization of the paper.} Section~\ref{sec:prelim} reviews the necessary definitions and known results. Section~\ref{sec:main-results} presents our main theorems and their proofs. Section~\ref{sec:conclusion} concludes the paper with remarks and potential directions for future research.\\
We denote the sets of real numbers, non-negative real numbers, integers, non-negative integers, and natural numbers by $\mathbb{R}$, $\mathbb{R}^{+}$, $\mathbb{Z}$, $\mathbb{Z}^{+}$, and $\mathbb{N}$, respectively.
\section{Preliminaries}\label{sec:prelim}

In this section, we recall the necessary definitions and fundamental results that will be used throughout the paper.

\begin{thm}[\cite{boyd1969nonlinear}]\label{thm:boyd-wong}
Let \((X, \Delta)\) be a metric space, and let \(T: X \to X\) be a mapping satisfying
\[
\Delta(Tx, Ty) \leq \phi(\Delta(x, y)) \quad \text{for all } x, y \in X,
\]
where \(\phi: \mathbb{R}^+ \to \mathbb{R}^+\) satisfies \(\phi(t) < t\) for each \(t > 0\) and is upper semicontinuous from the right. Then \(T\) has a unique fixed point and is called a \emph{Boyd-Wong \(\phi\)-contraction}.
\end{thm}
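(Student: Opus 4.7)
The plan is the classical Picard-iteration argument, with care taken to verify that right upper semicontinuity (rather than full upper semicontinuity) of $\phi$ suffices at every step where a limit of $\phi$-values is invoked. Throughout I assume completeness of $(X,\Delta)$, which appears to be implicit in the statement. Fix $x_0 \in X$ and set $x_n = T^n x_0$, $d_n = \Delta(x_n, x_{n+1})$. The applied contraction gives $d_{n+1} \leq \phi(d_n) < d_n$ whenever $d_n > 0$, so $(d_n)$ is monotonically decreasing and converges to some $d \geq 0$.

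To force $d = 0$, I would suppose $d > 0$ and use that $d_n \downarrow d$ from above. Right upper semicontinuity of $\phi$ at $d$ then yields $\limsup_n \phi(d_n) \leq \phi(d)$, and passing to the limit in $d_{n+1} \leq \phi(d_n)$ gives $d \leq \phi(d) < d$, a contradiction; hence $d_n \to 0$. The more substantive step is showing $(x_n)$ is Cauchy. Assume not, so there is $\epsilon > 0$ for which, for every $k$, one can pick indices $n_k \geq k$ and a minimal $m_k > n_k$ with $\Delta(x_{m_k}, x_{n_k}) \geq \epsilon$. Minimality gives $\Delta(x_{m_k-1}, x_{n_k}) < \epsilon$, so the triangle inequality and $d_n \to 0$ force $\Delta(x_{m_k}, x_{n_k}) \to \epsilon$ with the crucial property that the approach is \emph{from above}. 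Then from
\[
\Delta(x_{m_k}, x_{n_k}) \leq d_{m_k} + \Delta(x_{m_k+1}, x_{n_k+1}) + d_{n_k} \leq d_{m_k} + \phi\bigl(\Delta(x_{m_k}, x_{n_k})\bigr) + d_{n_k},
\]
taking $\limsup$ and applying right USC at $\epsilon$ yields $\epsilon \leq \phi(\epsilon) < \epsilon$, a contradiction. This Cauchy step is where I expect the main delicacy, since the entire use of the weaker right-USC hypothesis (rather than two-sided USC) hinges on arranging the subsequence so that $\Delta(x_{m_k}, x_{n_k})$ approaches $\epsilon$ from the right.

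With $(x_n)$ Cauchy, completeness delivers $x_n \to x^* \in X$. The contractive inequality shows $T$ is continuous (in fact nonexpansive-like in small scales), so $Tx^* = \lim_n Tx_n = \lim_n x_{n+1} = x^*$. For uniqueness, two distinct fixed points $x^*, y^*$ would satisfy $\Delta(x^*, y^*) = \Delta(Tx^*, Ty^*) \leq \phi(\Delta(x^*, y^*)) < \Delta(x^*, y^*)$, impossible. The only real subtlety, as noted, is engineering the one-sided convergence in the Cauchy argument so that the hypothesis on $\phi$ is used only at points approached from the right.
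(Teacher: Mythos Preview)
The paper does not prove this theorem; it is stated in the Preliminaries section as a cited result from \cite{boyd1969nonlinear}, so there is no in-paper proof to compare against. Your argument is the standard classical Boyd--Wong proof and is correct, including the key observation that the minimal-index construction forces $\Delta(x_{m_k},x_{n_k})\to\epsilon$ from above, so that only right upper semicontinuity of $\phi$ is invoked. The completeness of $(X,\Delta)$ that you flag as an implicit assumption is indeed required (and present in the original Boyd--Wong theorem) even though the paper's restatement omits it.
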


\begin{thm}[Kirk's Theorem, \cite{kirk2003}]\label{thm:kirk}
Let \((X, \Delta)\) be a complete metric space and \(T: X \to X\) a mapping such that:
\begin{enumerate}
    \item For each \(n \in \mathbb{N}\), there exists \(\phi_n : [0, \infty) \to [0, \infty)\) with \(\phi_n(t) < t\) for all \(t > 0\).
    \item \(\Delta(T^n x, T^n y) \leq \phi_n(\Delta(x,y))\) for all \(x, y \in X\).
    \item Each \(\phi_n\) is continuous and the sequence \(\{\phi_n\}\) converges uniformly to \(\phi\) on \(\mathbb{R}^+\), so that \(\phi\) is continuous.
    \item \(T\) has a bounded orbit; that is, there exists \(x_0 \in X\) such that \(\{T^n x_0\}\) is bounded.
\end{enumerate}
Then \(T\) has a unique fixed point \(x_*\), and \(T^n x \to x_*\) for all \(x \in X\).
\end{thm}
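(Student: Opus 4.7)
The plan is to follow the standard route: uniqueness of the fixed point, boundedness of every orbit, vanishing of the consecutive distances $s_n := \Delta(T^n x, T^{n+1} x)$, the Cauchy property of $\{T^n x\}$, and identification of the limit. Uniqueness is immediate from (2): if $Tx = x$ and $Ty = y$, then $\Delta(x, y) = \Delta(T^n x, T^n y) \leq \phi_n(\Delta(x, y))$, which forces $\Delta(x, y) = 0$. To promote the bounded-orbit hypothesis (4) to every orbit, I would apply (2) with the distinguished $x_0$ to obtain $\Delta(T^n y, T^n x_0) \leq \phi_n(\Delta(y, x_0))$; uniform convergence $\phi_n \to \phi$ on $\mathbb{R}^+$ bounds the right-hand side in $n$, placing $\{T^n y\}$ in a bounded neighborhood of $\{T^n x_0\}$. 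For the step sizes, (2) applied with index $1$ to the pair $(T^k x, T^{k+1} x)$ yields $s_{k+1} \leq \phi_1(s_k) < s_k$ whenever $s_k > 0$; hence $s_k \searrow s^*$, and continuity of $\phi_1$ gives $s^* \leq \phi_1(s^*)$, forcing $s^* = 0$ since $\phi_1(t) < t$ on $(0, \infty)$. Note also that $\phi_n(0) = 0$ for every $n$, a consequence of continuity and $\phi_n(t) < t$.

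The main technical step—and the one I expect to be the principal obstacle—is establishing the Cauchy property in the \emph{absence} of any monotonicity assumption on $\phi_n$. Set $\delta_n := \mathrm{diam}\{T^k x : k \geq n\}$, which is finite by boundedness and non-increasing, so $\delta_n \downarrow \delta$ for some $\delta \geq 0$. For any $i, j \geq 0$, (2) gives $\Delta(T^{n+m+i} x, T^{n+m+j} x) \leq \phi_n(\Delta(T^{m+i} x, T^{m+j} x))$, and the inner distance lies in $[0, \delta_m]$; taking the supremum over $i, j$,
\[
\delta_{n+m} \leq M_n(\delta_m), \qquad M_n(r) := \max_{t \in [0, r]} \phi_n(t).
\]
Uniform convergence of $\phi_n$ to $\phi$ yields $|M_n(r) - M(r)| \leq \|\phi_n - \phi\|_\infty$, with $M(r) := \max_{t \in [0, r]} \phi(t)$ continuous and non-decreasing because $\phi$ is continuous. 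Letting $n, m \to \infty$ in the displayed inequality gives $\delta \leq M(\delta)$. If $\delta > 0$, compactness of $[0, \delta]$ together with $\phi(t) < t$ on $(0, \delta]$ forces $M(\delta) < \delta$, a contradiction; hence $\delta = 0$ and $\{T^n x\}$ is Cauchy. This passage through the envelope $M_n$ is precisely where the lack of monotonicity is overcome, and it is the place where the uniform convergence of $\phi_n$ is indispensable.

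Completeness then furnishes $x_* = \lim T^n x$, and the estimate $\Delta(T x_*, x_*) \leq \phi_1(\Delta(x_*, T^n x)) + \Delta(T^{n+1} x, x_*)$ combined with continuity of $\phi_1$ at $0$ identifies $x_*$ as a fixed point. For arbitrary $y \in X$, its orbit is already known to be bounded, so the same diameter argument shows $\{T^n y\}$ is Cauchy, its limit is a fixed point, and uniqueness forces that limit to equal $x_*$.
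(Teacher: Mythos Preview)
Your argument is sound, but it is not the route this paper takes. Theorem~\ref{thm:kirk} is quoted in the preliminaries as Kirk's result and is not proved directly here; the paper's own contribution (Lemma~\ref{kirk_leader} and the final extension theorem) recovers and generalizes it by a different mechanism: a short contradiction argument, using right upper semicontinuity of $\phi$ together with the uniform convergence $\phi_n\to\phi$, shows that $T$ is an MK-Leader contraction and hence a Leader contraction, after which Leader's fixed point theorem (Theorem~\ref{leader_fixed}) delivers convergence of every Picard sequence at once. No diameter sequence $\delta_n$, no envelope $M_n(r)=\max_{[0,r]}\phi_n$, and no step-size analysis $s_{k+1}\le\phi_1(s_k)$ appear.

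Your direct approach is more elementary and self-contained, staying entirely within metric estimates and not invoking the Leader framework; it is close in spirit to Chen's reformulation. The paper's approach is more economical and more general: once the Leader property is in hand, boundedness of orbits, the Cauchy property, and convergence are all inherited from Leader's theorem simultaneously, and the same argument goes through when $\phi$ is merely right-upper-semicontinuous rather than continuous (which is one of the paper's main points). One small caveat in your write-up: at the step ``$\phi(t)<t$ on $(0,\delta]$ forces $M(\delta)<\delta$'' you use strict inequality for the \emph{limit} function $\phi$, whereas the hypotheses as literally stated give only $\phi_n(t)<t$ and hence $\phi(t)\le t$; the strict inequality for $\phi$ is part of Kirk's original assumptions and is imposed explicitly in the paper's Lemma~\ref{kirk_leader}, so it should be recorded as a hypothesis rather than treated as derived.
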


\begin{thm}[Chen's Theorem, \cite{chen2005asymptotic}]\label{thm:chen}
The conclusion of Theorem~\ref{thm:kirk} holds under the weaker assumptions that for some \(n_* \in \mathbb{N}\), \(\phi_{n_*}\) is upper semicontinuous with \(\phi_{n_*}(0) = 0\), and \(\phi\) is also upper semicontinuous.
\end{thm}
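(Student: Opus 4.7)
My plan is to run a Boyd--Wong-style argument on the single iterate $T^{n_*}$, replacing $\phi$ by $\phi_{n_*}$ and exploiting its upper semicontinuity together with $\phi_{n_*}(0) = 0$.

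Fix $x_0$ with bounded orbit, write $x_n = T^n x_0$, and set $D = \limsup_{n, m \to \infty} \Delta(x_n, x_m) \in [0, \infty)$. I would show $D = 0$ by contradiction. Choose $(n_k), (m_k)$ with $n_k, m_k \to \infty$ and $\Delta(x_{n_k}, x_{m_k}) \to D$, and apply the asymptotic contractive inequality at exponent $n_*$:
\[
\Delta(x_{n_k}, x_{m_k}) = \Delta\bigl(T^{n_*} x_{n_k - n_*},\, T^{n_*} x_{m_k - n_*}\bigr) \leq \phi_{n_*}\bigl(\Delta(x_{n_k - n_*}, x_{m_k - n_*})\bigr).
\]
The bounded orbit lets me extract a subsequence along which $\Delta(x_{n_k - n_*}, x_{m_k - n_*}) \to s$ for some $s \in [0, D]$, and upper semicontinuity of $\phi_{n_*}$ at $s$ then yields $D \leq \phi_{n_*}(s)$. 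If $s > 0$, the assumption $\phi_{n_*}(s) < s$ gives $D < D$; if $s = 0$, then $\phi_{n_*}(0) = 0$ forces $D \leq 0$. Both cases contradict $D > 0$, so $\{x_n\}$ is Cauchy and converges to some $x_* \in X$.

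Once $x_n \to x_*$, I would verify $T^{n_*} x_* = x_*$ via
\[
\Delta(x_{n + n_*}, T^{n_*} x_*) = \Delta(T^{n_*} x_n, T^{n_*} x_*) \leq \phi_{n_*}(\Delta(x_n, x_*)),
\]
noting that the right-hand side tends to $0$ because u.s.c.\ of $\phi_{n_*}$ at zero together with $\phi_{n_*}(0) = 0$ forces $\phi_{n_*}(t) \to 0$ as $t \to 0^+$. Since $\phi_{n_*}(t) < t$ for $t > 0$, the iterate $T^{n_*}$ has at most one fixed point, so $T x_*$--also fixed by $T^{n_*}$--must coincide with $x_*$. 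For an arbitrary $x \in X$, the estimate $\Delta(T^n x, x_*) \leq \phi_n(\Delta(x, x_*)) \leq \Delta(x, x_*)$ shows $\{T^n x\}$ is bounded, and rerunning the Cauchy argument on this orbit yields convergence, necessarily to $x_*$ by uniqueness.

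The main technical step is the simultaneous extraction of a convergent subsequence from $\{\Delta(x_{n_k - n_*}, x_{m_k - n_*})\}$ and the correct application of upper semicontinuity at the (a priori unknown) limit $s$; without this one only controls $\phi_{n_*}$ at varying arguments and cannot close the loop against $D$. The borderline case $s = 0$ is the other delicate point, and it is precisely what forces both the u.s.c.\ of $\phi_{n_*}$ at zero and the normalization $\phi_{n_*}(0) = 0$ into the hypotheses.
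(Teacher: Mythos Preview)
This statement is quoted in the paper as a preliminary result from \cite{chen2005asymptotic}; the paper does not give its own proof of it. The closest the paper comes is in the proof of its final extension theorem, where Chen's hypotheses enter only \emph{after} convergence $T^n x \to z$ has been established (via the Leader-contraction machinery of Lemma~\ref{kirk_leader}), and are used solely to show $T^{n_*}z=z$ and hence $Tz=z$.

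Your argument is formally correct relative to the way this paper states Kirk's hypotheses: condition~(1) of Theorem~\ref{thm:kirk} as written here asserts $\phi_n(t)<t$ for \emph{every} $n$, so in particular $\phi_{n_*}(t)<t$, and with that your proof is essentially Boyd--Wong applied to the single iterate $T^{n_*}$. Notice, though, that you never use the limit function $\phi$, its upper semicontinuity, or the uniform convergence $\phi_n\to\phi$; the whole asymptotic structure is bypassed. You should be aware that in Kirk's original theorem and in Chen's paper the condition $\phi_n(t)<t$ is \emph{not} assumed for each $n$---only the limit $\phi$ satisfies $\phi(t)<t$. Under those standard hypotheses your key step ``$\phi_{n_*}(s)<s$ gives $D<D$'' is unavailable, and the Cauchy argument must route through $\phi$ via the uniform convergence. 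That is precisely what the paper's Leader-contraction approach does, and it is the genuine content that your shortcut avoids.
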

Arav et al.,Theorem \ref{thm:arav}, provided conditions that ensure the uniform convergence of $\{T^{n}x\}$ on bounded subsets of $X$. This is an important step that preserves the stability of the convergence of the iterative sequences. 
\begin{thm}[Arav et al., \cite{arav2007}]\label{thm:arav}
Under the assumptions of Theorem~\ref{thm:chen}, and assuming that \(\{\phi_{n}\}\) converges uniformly to \(\phi\) on bounded intervals $[0,b]$ for each $b>0$, the sequence \(\{T^n x\}\) converges uniformly to the fixed point of \(T\) on every bounded subset of \(X\).
\end{thm}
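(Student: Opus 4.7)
The plan is to reduce uniform convergence on $B$ to a one-dimensional iteration driven by $\phi_{n_*}$, where $n_*$ is the index supplied by Theorem~\ref{thm:chen}. Fix a bounded set $B \subseteq X$ and let $x_*$ denote the unique fixed point of $T$. Put
\[
D_n := \sup_{x \in B}\Delta(T^n x, x_*),
\]
so that uniform convergence on $B$ is exactly $D_n \to 0$. The contractive bound $\Delta(T^n x, x_*) \le \phi_n(\Delta(x, x_*))$ together with $\phi_n(t) < t$ for $t > 0$ shows each $D_n$ is finite.

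The key construction is the envelope
\[
\tilde{M}(s) := \sup_{u \in [0, s]} \phi_{n_*}(u),
\]
which is non-decreasing by definition. Since $\phi_{n_*}$ is upper semicontinuous on the compact interval $[0, s]$, the supremum is attained at some $u^* \in [0, s]$; if $u^* > 0$ then $\tilde{M}(s) = \phi_{n_*}(u^*) < u^* \le s$, while if $u^* = 0$ then $\tilde{M}(s) = \phi_{n_*}(0) = 0 < s$. Hence $\tilde{M}(s) < s$ for every $s > 0$. A short one-sided argument, using upper semicontinuity of $\phi_{n_*}$ to control $\sup_{u \in (s, s']}\phi_{n_*}(u)$ as $s' \to s^+$, shows $\tilde{M}$ itself is upper semicontinuous.

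Applying the contractive condition $\Delta(T^{n_*}(T^n x), T^{n_*} x_*) \le \phi_{n_*}(\Delta(T^n x, x_*))$ and taking suprema over $x \in B$ yields the recurrence $D_{n + n_*} \le \tilde{M}(D_n)$, which iterates by monotonicity of $\tilde{M}$ to
\[
D_{r + k n_*} \le \tilde{M}^k(D_r) \quad \text{for every } r \in \{0, 1, \dots, n_* - 1\} \text{ and } k \ge 0.
\]
A standard Boyd--Wong-type argument then gives $\tilde{M}^k(s) \to 0$ for any $s \ge 0$: the iterates are strictly decreasing while positive and converge to some $\alpha \ge 0$; if $\alpha > 0$, upper semicontinuity of $\tilde{M}$ at $\alpha$ yields $\alpha \le \tilde{M}(\alpha) < \alpha$, a contradiction. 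Therefore $D_{r + k n_*} \to 0$ for each residue $r$, and combining the $n_*$ subsequential limits yields $D_n \to 0$.

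I expect the main technical obstacle to be verifying upper semicontinuity of $\tilde{M}$ --- in particular, controlling $\sup_{u \in (s, s']} \phi_{n_*}(u)$ as $s' \to s^+$ via u.s.c.\ of $\phi_{n_*}$ --- together with the accompanying iteration lemma $\tilde{M}^k(s) \to 0$. The assumed uniform convergence $\phi_n \to \phi$ on bounded intervals enters mainly through the pointwise limit $\phi_n(0) \to \phi(0)$, which keeps the startup values $D_r$ for $r \in \{0, \dots, n_* - 1\}$ uniformly bounded so the iteration can be launched.
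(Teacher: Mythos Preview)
Your argument is correct but follows a genuinely different route from the paper's. The paper (in its own uniform-convergence result, Theorem~\ref{thm:uniformly}, which re-derives Arav et al.) works with the \emph{limit} function $\phi$ via two geometric lemmas: Lemma~\ref{invariant} shows $T^{n}B(z,\epsilon)\subseteq B(z,\epsilon)$ for all large $n$, and Lemma~\ref{retract} shows that every $x\in B(z,K)$ reaches $B(z,\epsilon)$ within a uniformly bounded number of steps, using the positive infimum of $t-\phi(t)$ on $[\epsilon/2,K]$ together with the uniform convergence $\phi_n\to\phi$ on $[0,K]$. You instead fix the single index $n_*$ supplied by Chen, form the monotone u.s.c.\ envelope $\tilde M(s)=\sup_{[0,s]}\phi_{n_*}$, and run a Boyd--Wong iteration on the diameters $D_n$ along residues modulo $n_*$. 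This is more elementary and, notably, never really uses the added hypothesis of uniform convergence on bounded intervals: since the paper's statement of Kirk's condition~(1) already gives $\phi_r(t)<t$ for every $r$, the startup values satisfy $D_r\le D_0<\infty$ directly, so your closing remark about needing $\phi_n(0)\to\phi(0)$ is superfluous. The trade-off is portability: the paper's $\phi$-based route survives when only $\phi(t)<t$ is assumed (rather than each $\phi_n(t)<t$) and when no individual $\phi_{n_*}$ is u.s.c., whereas your envelope construction needs both $\phi_{n_*}(t)<t$ and the upper semicontinuity of $\phi_{n_*}$ in an essential way.
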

Reich and Zaslavski \cite{reich2008}, omitted the assumption that for some $n_*\in \mathbb{N}$, \(\phi_{n_*}\) is upper semicontinuous  and \(\phi_{n_*}(0)=0\) and extended the above result as follows.
\begin{thm}[Reich and Zaslavski, \cite{reich2008}]\label{thm:reich-zaslavski}
Let \((X, \Delta)\) be a metric space, and let \(T : X \to X\) be a mapping. Suppose that:
\begin{enumerate}
    \item \(T^m\) is continuous for some \(m \in \mathbb{N}\),
    \item There exists a subsequence \(\{\phi_{n_k}\}\) of functions \(\phi_n : [0,\infty) \to [0,\infty)\) converging uniformly to a function \(\phi\) on every bounded interval \([0, b]\), where \(b > 0\),
    \item For all \(x, y \in X\) and \(n \in \mathbb{N}\), 
    \[
    \Delta(T^n x, T^n y) \leq \phi_n(\Delta(x, y)),
    \]
    \item \(\phi(t) < t\) for all \(t > 0\).
\item $T$ has a bounded orbit $\mathcal{O}(x)$ for some $x\in X.$
\item for each $a>0$, there is $m_a\in\mathbb{N}$ such that:
\[
\sup\{\phi_{n}(t):t\in[0,a], n\geq m_a\}<\infty
\]
\end{enumerate}
Then there exists a unique point \(x_* \in X\) such that \(T^n x \to x_*\) uniformly on bounded subsets of \(X\). Moreover, \(x_*\) is a fixed point of \(T\).
\end{thm}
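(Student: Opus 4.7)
The plan is to establish the conclusion in four stages: first, upgrade the bounded-orbit hypothesis (5) from a single point to every point of $X$; second, show that the Picard sequence $\{T^n y\}$ is Cauchy for each $y \in X$; third, identify the common limit as a fixed point of $T$ using the continuity of $T^m$; and fourth, deduce uniqueness together with the uniform convergence statement.

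For the first stage I would fix an arbitrary $y \in X$ and apply (3) to the pair $(y, x)$, obtaining $\Delta(T^n y, T^n x) \leq \phi_n(\Delta(y, x))$; hypothesis (6) with $a = \Delta(y,x)$ bounds the right-hand side uniformly in $n$, and together with the bound on $\{T^n x\}$ this yields boundedness of $\{T^n y\}$. The second stage is the heart of the argument: setting $D_N = \sup_{i,j \geq N} \Delta(T^i y, T^j y)$, a non-increasing sequence with limit $D_\infty \geq 0$, the semigroup decomposition $T^{n_k + i} = T^{n_k} \circ T^i$ combined with (3) yields
\[
D_{n_k + p} \leq \sup_{t \in [0, D_p]} \phi_{n_k}(t).
\]
Passing $k \to \infty$ along the uniformly convergent subsequence (so that the right-hand side tends to $\sup_{t \in [0, D_p]} \phi(t)$, finite by (6)), and then $p \to \infty$, one aims to extract a point $t_0 \in [0, D_\infty]$ at which $\phi(t_0) \geq D_\infty$; the strict inequality $\phi(t) < t$ then forces $D_\infty = 0$. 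In the third stage, the Cauchy sequence converges (in the ambient complete metric space, as is standard in this context) to some $x_* \in X$; continuity of $T^m$ gives $T^m x_* = \lim T^m(T^n y) = x_*$, so $x_*$ is a fixed point of $T^m$. Working then with the (at most) $m$-periodic $T$-orbit $\{T^i x_*\}$ and the quantities $a_i = \Delta(T^i x_*, T^{i+1} x_*)$, I would apply (3) along $n = n_k$ and use pigeonhole on the residues $n_k \bmod m$ to show that $\max_i a_i$ must vanish, yielding $T x_* = x_*$. The fourth stage is routine: uniqueness follows from applying (3) at $n = n_k$ to two fixed points and passing to the limit, while uniform convergence on a bounded $B \subseteq X$ follows from $\Delta(T^n y, x_*) = \Delta(T^n y, T^n x_*) \leq \phi_n(\Delta(y, x_*))$ together with the uniform convergence of $\phi_{n_k}$ on bounded intervals (with finite suprema ensured by (6)).

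The principal obstacle is the second stage. Without any regularity assumption on $\phi$ (not even right-upper semicontinuity is posited here), passing the strict inequality $\phi(t) < t$ through a supremum is delicate, and the double limit $k \to \infty$ followed by $p \to \infty$ must be orchestrated carefully. Here condition (6) plays an essential role: it guarantees that the suprema $\sup_{t \in [0, D_p]} \phi_{n_k}(t)$ stay finite, so that the uniform convergence on $[0, D_p]$ can be exploited. A secondary subtlety arises in the third stage, since the subsequence $\{n_k\}$ need not contain any multiple of $m$; the pigeonhole on $n_k \bmod m$ together with a strict-maximum argument on the finite set $\{a_i\}$ is what circumvents this difficulty.
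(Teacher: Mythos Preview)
This theorem appears in the paper only as a cited preliminary result from Reich and Zaslavski \cite{reich2008}; the paper does not supply its own proof, so there is no in-paper argument to compare your proposal against. The paper's own contribution concerning this result is the later corollary showing that hypotheses (5) and the upper semicontinuity of $\phi$ can be dropped, by reducing to the Leader-contraction framework.

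That said, your outline has a genuine gap in Stage~4. The inequality $\Delta(T^n y, x_*) \le \phi_n(\Delta(y,x_*))$ does \emph{not} give uniform convergence to~$0$: along the subsequence $n_k$ the right-hand side tends to $\phi(\Delta(y,x_*))$, which is strictly less than $\Delta(y,x_*)$ but need not be small. A single application of the contractive inequality only shrinks distances by one ``step'' of $\phi$; to reach an $\epsilon$-ball one must iterate, as in the paper's Lemmas~\ref{invariant} and~\ref{retract} (or the analogous lemmas in \cite{reich2008}), showing first that $T^s B(x_*,\epsilon)\subseteq B(x_*,\epsilon)$ for large $s$, and then that every point of $B(x_*,K)$ is pushed into $B(x_*,\epsilon)$ within a bounded number of iterates. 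Your Stage~2 concern is also real: without any semicontinuity on $\phi$, extracting a point $t_0$ with $\phi(t_0)\ge D_\infty$ from the inequalities $D_\infty \le \sup_{[0,D_p]}\phi$ is not automatic, and you have not indicated how to close it. Finally, you invoke completeness of $X$ (``as is standard'') to obtain a limit of the Cauchy sequence, but the statement as recorded here does not assume it; this should be flagged explicitly rather than absorbed silently.
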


\medskip

Next, we recall some fundamental definitions.

\begin{defn}[Picard sequence]
Let \((X,\Delta)\) be a \(b\)-metric space and \(T: X \to X\). For any \(x \in X\), the \emph{Picard sequence} (or orbit) of \(T\) at \(x\) is the sequence \(\{T^n x\}\) defined recursively by
\[
T^{n+1} x = T(T^n x), \quad \text{with } T^0 x = x.
\]
We denote this orbit by \(\mathcal{O}(x)\).
\end{defn}

\begin{defn}
A mapping \(T: X \to X\) is called \emph{non-expansive} if
\[
\Delta(Tx, Ty) \leq \Delta(x,y) \quad \text{for all } x,y \in X.
\]
\end{defn}

Leader \cite{leader1983equivalent} introduced the following classes of contractions:

\begin{defn}[\cite{keeler1969theorem}, \cite{leader1983equivalent}]
Let \(T: X \to X\) be a mapping on a \(b\)-metric space \((X,\Delta)\).
\begin{enumerate}
    \item \(T\) is a \emph{Meir-Keeler Leader (MK-Leader) contraction} if for each \(\epsilon > 0\) there exist \(\delta > 0\) and \(r \in \mathbb{N}\) such that
    \[
    \epsilon \leq \Delta(x,y) < \epsilon + \delta \implies \Delta(T^r x, T^r y) < \epsilon.
    \]
    \item \(T\) is a \emph{Leader contraction} if for each \(\epsilon > 0\) there exist \(\delta > 0\) and \(r \in \mathbb{N}\) such that
    \[
    \Delta(x,y) < \epsilon + \delta \implies \Delta(T^r x, T^r y) < \epsilon.
    \]
    \item \(T\) is a \emph{Meir-Keeler contraction} if for each \(\epsilon > 0\) there exists \(\delta > 0\) such that
    \[
    \epsilon \leq \Delta(x,y) < \epsilon + \delta \implies \Delta(T x, T y) < \epsilon.
    \]
\end{enumerate}
\end{defn}

\begin{defn}
A point \(z \in X\) is called a \emph{contractive fixed point} of \(T\) if \(T z = z\) and the Picard sequence \(\{T^n x\}\) converges to \(z\) for every \(x \in X\).
\end{defn}

\begin{thm}[Leader's Fixed Point Theorem]\label{leader_fixed}
If \(T\) is a Leader contraction on \((X,\Delta)\), then for every \(x \in X\), the sequence \(\{T^n x\}\) converges to some \(z \in X\). If \(X\) is complete, then \(z\) is the unique contractive fixed point of \(T\).
\end{thm}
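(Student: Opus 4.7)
The plan is three-fold: establish (i) boundedness of every orbit, (ii) the Cauchy property of every Picard sequence, and (iii) under completeness, that the limit is the unique contractive fixed point. Throughout, fix $x \in X$ and write $x_n := T^n x$.

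For (i), I would apply the Leader condition at $\epsilon = 1$, yielding $\delta > 0$ and $r \in \mathbb{N}$ with $\Delta(u,v) < 1 + \delta \implies \Delta(T^r u, T^r v) < 1$. In particular, $T^r$ maps any $1$-ball (in $\Delta$) about a point into the $1$-ball about its image. Writing $n = kr + i$ with $0 \leq i < r$ and using the finite initial pool $M := \max_{0 \leq i, j \leq 2r} \Delta(x_i, x_j)$, an induction on the block index $k$ shows that all pairwise distances $\Delta(x_k, x_l)$ remain uniformly bounded, so the orbit is bounded.

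For (ii), let $D_n := \sup_{k, l \geq n} \Delta(x_k, x_l)$, which is finite by (i) and non-increasing in $n$; set $L := \lim_n D_n$. Assume $L > 0$ for contradiction. Applying the Leader condition at a carefully chosen $\epsilon \in (0, L)$ --- one for which $\epsilon + \delta(\epsilon) > L$ --- yields that, for $n$ large enough, $D_n < \epsilon + \delta(\epsilon)$; hence every distance in the tail beyond $n + r(\epsilon)$ is strictly less than $\epsilon < L$, contradicting $D_{n + r(\epsilon)} \geq L$. Locating such $\epsilon$ is the crux: from the Leader condition at $\epsilon = L$ itself one argues that the parameter $\delta$ cannot uniformly collapse near $L$, which permits the required selection by a limiting argument at two scales.

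For (iii), completeness gives $x_n \to z$ for some $z \in X$. Given $\epsilon > 0$ with associated $\delta, r$ from the Leader condition, $\Delta(x_n, z) < \delta$ for large $n$ forces $\Delta(x_{n+r}, T^r z) < \epsilon$, and since also $x_{n+r} \to z$ we conclude $\Delta(z, T^r z) \leq \epsilon$ for every $\epsilon > 0$, giving $T^r z = z$. The Picard sequence $(T^n z)$ is thus periodic with period dividing $r$, yet Cauchy by (ii); a periodic Cauchy sequence is constant, so $Tz = z$. For uniqueness, suppose two starts yielded distinct fixed limits $z \neq w$: applying the Leader condition at $\epsilon = \Delta(z, w) > 0$ gives $\Delta(z, w) = \Delta(T^{r} z, T^{r} w) < \epsilon = \Delta(z, w)$, a contradiction. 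Hence the limit is independent of the starting point and is the unique contractive fixed point.

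The main obstacle is step (ii): the Leader condition supplies strict pointwise shrinkage of individual distances, whereas we need shrinkage of the supremum $D_n$. Converting pointwise strict inequalities into a quantitative gap at the level of $D_n$ requires the two-scale selection of $\epsilon$ relative to $L$ described above, and this is the delicate technical core of the proof.
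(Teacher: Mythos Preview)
The paper does not prove this theorem; it is stated in the preliminaries as a known result attributed to Leader, so there is no in-paper proof to compare your proposal against.

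Evaluating your proposal on its own merits: step (iii) is sound, but steps (i) and (ii) both have genuine gaps.

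In step (i), you fix $r$ from the Leader condition at $\epsilon = 1$ and then define $M$ as the maximum pairwise distance among $x_0, \dots, x_{2r}$. But the Leader condition at $\epsilon = 1$ only controls pairs whose distance is already below $1 + \delta$; it says nothing when $M > 1 + \delta$, so the promised induction cannot start. If instead you applied the Leader condition at $\epsilon = M$, you would obtain a new iterate count $r'$, possibly larger than $r$, and the ``initial pool'' would have to expand to cover $x_0,\dots,x_{2r'}$---a circularity you have not broken.

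In step (ii)---which you yourself flag as the crux---the entire argument rests on locating some $\epsilon \in (0,L)$ with $\epsilon + \delta(\epsilon) > L$. You assert that ``the parameter $\delta$ cannot uniformly collapse near $L$,'' but nothing in the Leader hypothesis forbids $\delta(\epsilon) \le L - \epsilon$ for every $\epsilon < L$: the definition only guarantees that \emph{some} positive $\delta$ exists for each $\epsilon$, with no quantitative lower bound. Applying the condition at $\epsilon = L$ gives $\Delta(T^{r_L}u, T^{r_L}v) < L$ whenever $\Delta(u,v) < L + \delta_L$, but this strict inequality does not pass to the supremum $D_n$; iterating yields only $D_{n+kr_L} \le L$, never a strict drop below $L$. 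The ``two-scale selection'' you invoke is precisely the missing lemma, not a consequence of anything you have written, and without it the proposal does not establish the Cauchy property.
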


\section{Main Results}\label{sec:main-results}
In this section, we develop new fixed point results for mappings satisfying asymptotic contraction-type conditions. Our approach generalizes and extends well-known results such as Kirk's theorem and those by Chen and Arav, particularly by weakening assumptions related to orbit boundedness and upper semicontinuity. We introduce a framework based on Leader contractions and their Meir-Keeler variants, which allows us to unify various earlier theorems and establish convergence properties under more general hypotheses.

\begin{lem}\label{mk_leader_is_leader}
Every MK-Leader contraction \(T : X \to X\) is a Leader contraction.
\end{lem}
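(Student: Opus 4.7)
My plan is to fix $\epsilon > 0$, invoke the MK-Leader hypothesis at $\epsilon$ to obtain constants $r_0 \in \mathbb{N}$ and $\delta_0 > 0$ with $\epsilon \leq \Delta(x,y) < \epsilon + \delta_0$ forcing $\Delta(T^{r_0} x, T^{r_0} y) < \epsilon$, and then upgrade these constants to Leader constants. Since MK-Leader already covers the strip $[\epsilon, \epsilon + \delta_0)$, the only remaining task is to verify $\Delta(T^r x, T^r y) < \epsilon$ for pairs with $\Delta(x,y) < \epsilon$ and some suitable $r$.

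For a pair with $\Delta(x,y) = \epsilon' \in (0, \epsilon)$, applying MK-Leader at the scale $\epsilon'$ produces a pair-dependent iteration count $r' = r_{\epsilon'}$ with $\Delta(T^{r'} x, T^{r'} y) < \epsilon' < \epsilon$; the trivial case $\Delta(x,y) = 0$ gives $T^r x = T^r y$ for every $r$. This shows that some iterate contracts each such pair below $\epsilon$, but the iteration count $r'$ depends on the pair.

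The main obstacle is extracting a \emph{uniform} iteration count $r$, since the Leader condition requires a single $r$ simultaneously valid for all pairs in the $(\epsilon + \delta)$-ball around the diagonal. My approach is to consider iterates of $T^{r_0}$: starting from any pair with $\Delta(x,y) < \epsilon$, the sequence $d_n = \Delta(T^{n r_0} x, T^{n r_0} y)$ either stays below $\epsilon$ (so $r_0$ already works), enters the strip $[\epsilon, \epsilon + \delta_0)$ (so MK-Leader forces the next term below $\epsilon$), or jumps into $[\epsilon + \delta_0, \infty)$. This last scenario must be excluded by chaining applications of MK-Leader at successively higher scales, and proving that such a chain terminates in a bounded number of steps --- yielding a single $r$ that works for every pair --- is the central technical difficulty I anticipate.
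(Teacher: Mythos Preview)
Your analysis is more careful than the paper's own proof. The paper argues exactly along the ``simpler route'' you considered and rejected: after noting that for every pair $x\neq y$ the MK-Leader hypothesis (applied at level $\epsilon'=\Delta(x,y)$) yields \emph{some} $r$ with $\Delta(T^{r}x,T^{r}y)<\Delta(x,y)$, the paper fixes $\epsilon>0$, takes the MK-Leader constants $(\delta,r)$ at level $\epsilon$, and declares these to be the Leader constants. For pairs in the strip $[\epsilon,\epsilon+\delta)$ this is correct; for pairs with $0<\Delta(x,y)<\epsilon$ the paper simply writes ``$\Delta(T^{r}x,T^{r}y)<\Delta(x,y)<\epsilon$'' citing the preliminary observation. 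But that observation only supplies a pair-dependent iterate, not the fixed $r$ coming from MK-Leader at level $\epsilon$. In other words, the paper silently conflates two different $r$'s at precisely the point you flagged as the ``main obstacle.''

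So you have correctly located a genuine gap; the paper does not resolve it, it glosses over it. Your own proposal, however, is also incomplete: you explicitly leave open the scenario in which the iterate sequence $d_n=\Delta(T^{nr_0}x,T^{nr_0}y)$ jumps above $\epsilon+\delta_0$, and your suggested ``chaining'' of MK-Leader at higher scales gives no mechanism for producing a \emph{single} $r$ valid for all pairs, nor any reason the chain should terminate uniformly. Without an additional hypothesis (for instance non-expansiveness of $T$, or that the same $r$ works for all $\epsilon$ as in the classical Meir--Keeler case where $r=1$), the uniformity problem remains open in both the paper's argument and yours.
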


\begin{proof}
Let \(T\) be an MK-Leader contraction. For all \(x,y \in X\), \(x \neq y\), there exists \(r \in \mathbb{N}\) such that
\begin{equation}\label{l_0}
\Delta(T^{r}x, T^{r}y) < \Delta(x,y).
\end{equation}

To see this, let \(\epsilon = \Delta(x,y) > 0\). By the definition of MK-Leader contraction, there exists \(r \in \mathbb{N}\) such that
\[
\Delta(T^{r}x, T^{r}y) < \epsilon = \Delta(x,y).
\]

To show that \(T\) is a Leader contraction, suppose \(\epsilon > 0\), and let \(\delta > 0\), \(r \in \mathbb{N}\) satisfy:
\begin{equation}\label{l_1}
\epsilon \leq \Delta(x,y) < \epsilon + \delta \implies \Delta(T^{r}x, T^{r}y) < \epsilon \quad \text{for all } x,y \in X.
\end{equation}

For all \(x,y \in X\) with \(\Delta(x,y) < \epsilon + \delta\), consider:
\begin{enumerate}
    \item If \(\Delta(x,y) \geq \epsilon\), then \eqref{l_1} implies \(\Delta(T^{r}x, T^{r}y) < \epsilon\).
    \item If \(\Delta(x,y) < \epsilon\), then \eqref{l_0} implies \(\Delta(T^{r}x, T^{r}y) < \Delta(x,y) < \epsilon\).
    \item If \(x = y\), the inequality trivially holds.
\end{enumerate}
\end{proof}
The following result shows that every Kirk's contraction is a Leader contraction.
\begin{lem}[Every Kirk's contraction is a Leader contraction ]\label{kirk_leader}
Let \((X,\Delta)\) be a \(b\)-metric space, \(\phi, \phi_n : \mathbb{R}^+ \to \mathbb{R}^+\), and \(T: X \to X\) satisfy:
\begin{enumerate}
    \item \(0 < \phi(t) < t\) for all \(t > 0\), and \(\phi\) is upper semicontinuous from the right at each \(t > 0\).
    \item \(\phi_n(t) \to \phi(t)\) uniformly as \(n \to \infty\) on every bounded interval \([0,b]\).
    \item
    \[
    \Delta(T^{n}x, T^{n}y) \leq \phi_n(\Delta(x,y)) \quad \text{for all } x,y \in X, n \in \mathbb{N}.
    \]
\end{enumerate}
Then:
\begin{enumerate}
    \item \(T\) is a Leader contraction.
    \item For each \(x \in X\), the Picard sequence \(\{T^{n} x\}\) converges to some point \(z \in X\).
\end{enumerate}
\end{lem}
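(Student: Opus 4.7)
The plan is to take the MK-Leader detour: first verify that $T$ satisfies the Meir–Keeler Leader inequality, then invoke Lemma~\ref{mk_leader_is_leader} to promote this to the full Leader property (conclusion (1)), and finally cite Leader's Fixed Point Theorem (Theorem~\ref{leader_fixed}) for the Picard convergence (conclusion (2)). The reason for routing through MK-Leader, rather than attacking Leader head-on, is that hypothesis (1) only provides one-sided (right) upper semicontinuity of $\phi$, which naturally produces uniform estimates on a half-open interval $[\epsilon,\epsilon+\delta)$ to the right of $\epsilon$—precisely the window required by the MK-Leader condition.

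To verify MK-Leader at a given $\epsilon > 0$, I would combine hypotheses (1)–(3) in two layers. Set $\eta := (\epsilon - \phi(\epsilon))/2 > 0$, which is positive since $\phi(\epsilon) < \epsilon$. Right upper semicontinuity of $\phi$ at $\epsilon$ yields $\delta > 0$ such that $\phi(s) < \phi(\epsilon) + \eta = \epsilon - \eta$ for every $s \in [\epsilon,\epsilon+\delta)$. Next, uniform convergence $\phi_n \to \phi$ on the compact interval $[0,\epsilon+\delta]$ supplies $r \in \mathbb{N}$ with $|\phi_r(s) - \phi(s)| < \eta/2$ throughout; stacking these two bounds gives $\phi_r(s) < \epsilon - \eta/2 < \epsilon$ for $s \in [\epsilon,\epsilon+\delta)$. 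Feeding this into the asymptotic contractivity assumption, the implication
\[
\epsilon \le \Delta(x,y) < \epsilon + \delta \;\Longrightarrow\; \Delta(T^r x, T^r y) \le \phi_r(\Delta(x,y)) < \epsilon
\]
is exactly the MK-Leader inequality at scale $\epsilon$. Lemma~\ref{mk_leader_is_leader} then converts this into the Leader condition, giving (1); part (2) is immediate from Theorem~\ref{leader_fixed}, which guarantees convergence of every Picard sequence in $(X,\Delta)$ whenever $T$ is Leader.

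The main subtlety—and the reason a direct proof of Leader would be awkward—is that we have no uniform margin for $\phi$ on the left part $[0,\epsilon)$ of the interval a Leader estimate would touch: a direct Leader argument would need $\phi_r(s) < \epsilon$ for every $s \in [0,\epsilon+\delta)$, but under only right upper semicontinuity $\phi(s)$ may creep arbitrarily close to $\epsilon$ as $s \uparrow \epsilon$, spoiling any uniform bound obtainable via $\phi_n \to \phi$. The MK-Leader formulation sidesteps this cleanly because it restricts attention to $s \ge \epsilon$, where right upper semicontinuity is precisely the information available; matching up the two $\eta$-levels so that the uniform-convergence error does not eat through the semicontinuity margin is the only place where care is required.
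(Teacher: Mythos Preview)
Your proof is correct and follows the same route as the paper: establish the MK-Leader property, upgrade to Leader via Lemma~\ref{mk_leader_is_leader}, and conclude Picard convergence from Theorem~\ref{leader_fixed}. The only difference is that the paper verifies the MK-Leader step by contradiction (extracting sequences $\{x_n\},\{y_n\}$ witnessing failure at some $\epsilon$), whereas you construct $\delta$ and $r$ directly from the margin $\eta=(\epsilon-\phi(\epsilon))/2$; your direct argument is in fact tidier.
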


\begin{proof}
We aim to show that \( T \) is an MK-Leader contraction. Suppose, for the sake of contradiction, that \( T \) is not an MK-Leader contraction. Then there exists \(\epsilon > 0\) and sequences \(\{x_n\}, \{y_n\}\) such that
\[
\epsilon \leq \Delta(x_n, y_n) \leq \epsilon + \frac{1}{n} \quad \text{for all } n,
\]
and
\begin{equation}\label{if_not_leader}
\epsilon \leq \Delta(T^i x_n, T^i y_n) \quad \text{for all } i,n\in\mathbb{Z}^{+}.
\end{equation}

Since \(\Delta(x_n,y_n) \downarrow \epsilon\), there exists \(b > 0\) such that \(\Delta(x_n,y_n) \in [0,b]\) for all \(n\).

Because \(\phi\) is right u.s.c., there exists \(m_0\) such that for all \(i \geq m_0\),
\[
\lim_{i \to \infty} [\phi_i(\Delta(x_n,y_n)) - \phi(\epsilon)] \leq 0.
\]

Uniform convergence implies there exists \(m_1\) such that for all \(i \geq m_1\),
\[
\phi_i(\Delta(x_n,y_n)) - \phi(\Delta(x_n,y_n)) < \epsilon - \phi(\epsilon).
\]

Choose \(i \geq \max\{m_0, m_1\}\), then
\begin{align*}
\epsilon &\leq \limsup_{i \to \infty} \Delta(T^i x_n, T^i y_n) \leq \limsup_{i \to \infty} \phi_i(\Delta(x_n,y_n)) \\
&= \lim_{i \to \infty} [\phi_i(\Delta(x_n,y_n)) - \phi(\Delta(x_n,y_n))] + \lim_{i \to \infty} [\phi(\Delta(x_n,y_n)) - \phi(\epsilon)] + \phi(\epsilon) \\
&\leq \lim_{i \to \infty} [\phi_i(\Delta(x_n,y_n)) - \phi(\Delta(x_n,y_n))] + \phi(\epsilon) \\
&< \epsilon - \phi(\epsilon) + \phi(\epsilon) = \epsilon,
\end{align*}
a contradiction. Thus \(T\) is an MK-Leader contraction, and by Lemma~\ref{mk_leader_is_leader}, a Leader contraction.
\end{proof}
\begin{rem}
In Lemma~\ref{kirk_leader}, the sequence \(\{T^n x\}\) converges to some \(z \in X\), but without additional assumptions (e.g., completeness of the graph, continuity of \(T\)), \(z\) need not be a fixed point of \(T\). Example~\ref{ex:counter} demonstrates such a case.\\
An important improvement in Lemma~\ref{kirk_leader} is that we do not assume boundedness of orbits, often required in similar results. Moreover, instead of full upper semicontinuity, we require only right upper semicontinuity, addressing a conjecture by Jachymski et al.~\cite[Remark 4]{jachymski2004}.\\
\end{rem}
\begin{cor}\label{sub_converge}
Lemma~\ref{kirk_leader} remains valid if the assumption that \(\phi_n(t) \to \phi(t)\) uniformly as \(n \to \infty\) on every bounded interval \([0, b]\) is replaced by the weaker condition that there exists a subsequence \(\{n(k)\}_{k=0}^\infty\) such that \(\phi_{n(k)}(t) \to \phi(t)\) uniformly as \(k \to \infty\) on every bounded interval \([0, b]\).
\end{cor}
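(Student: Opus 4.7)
The plan is to re-run the proof of Lemma~\ref{kirk_leader} almost verbatim, substituting the iteration index $i$ by the values $n(k)$ of the given subsequence and letting $k\to\infty$ in place of $i\to\infty$. The key observation is that the original contradiction argument uses the uniform (hence pointwise) convergence of $\{\phi_n\}$ only at points $\Delta(x_n,y_n)\in[0,b]$ and only through a limit along \emph{some} growing sequence of iteration indices. Subsequence uniform convergence supplies exactly this, while the negation of the MK-Leader condition still forces $\epsilon\le \Delta(T^{i}x_n,T^{i}y_n)$ for every $i\in\mathbb{N}$ — in particular for $i=n(k)$ — so no inequality is weakened by passing to the subsequence.

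Concretely, I would assume for contradiction that $T$ is not an MK-Leader contraction and extract $\epsilon>0$ together with sequences $\{x_n\},\{y_n\}\subset X$ satisfying $\epsilon\le \Delta(x_n,y_n)<\epsilon+1/n$ and $\epsilon\le \Delta(T^{i}x_n,T^{i}y_n)$ for all $i,n\in\mathbb{N}$, exactly as in the original proof. Setting $i=n(k)$ and applying hypothesis~(3),
\[
\epsilon \;\le\; \Delta\bigl(T^{n(k)}x_n,T^{n(k)}y_n\bigr) \;\le\; \phi_{n(k)}\bigl(\Delta(x_n,y_n)\bigr)
\]
for all $k,n$. Fix $n$ and let $k\to\infty$: since all values $\Delta(x_n,y_n)$ lie in a single bounded interval $[0,b]$ and $\phi_{n(k)}\to\phi$ uniformly on $[0,b]$, we get pointwise convergence at $\Delta(x_n,y_n)$, whence $\epsilon\le \phi(\Delta(x_n,y_n))$. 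Letting $n\to\infty$ and invoking the right upper semicontinuity of $\phi$ at $\epsilon$ together with $\Delta(x_n,y_n)\downarrow \epsilon$ then yields $\epsilon\le \limsup_{n\to\infty}\phi(\Delta(x_n,y_n))\le \phi(\epsilon)<\epsilon$, the desired contradiction.

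Hence $T$ is an MK-Leader contraction, and by Lemma~\ref{mk_leader_is_leader} also a Leader contraction, so Picard-sequence convergence follows from Theorem~\ref{leader_fixed}. I do not anticipate a substantive obstacle: the only minor point to verify is that all arguments $\Delta(x_n,y_n)$ sit in a common bounded interval $[0,b]$ on which the subsequence is uniformly convergent, which is immediate from $\Delta(x_n,y_n)<\epsilon+1$.
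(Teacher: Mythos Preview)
Your proposal is correct and follows essentially the same route as the paper: both simply re-run the MK-Leader contradiction argument of Lemma~\ref{kirk_leader} with the iteration index $i$ replaced by the subsequence values $n(k)$, using that the negated MK-Leader condition gives $\epsilon\le \Delta(T^{i}x_n,T^{i}y_n)$ for \emph{all} $i$ and hence in particular for $i=n(k)$. Your two-step limit ($k\to\infty$ first to reach $\epsilon\le\phi(\Delta(x_n,y_n))$, then $n\to\infty$ via right upper semicontinuity) is in fact a cleaner presentation of the same idea; the only cosmetic point is that $\Delta(x_n,y_n)$ converges to $\epsilon$ from the right but not necessarily monotonically, which is still exactly what right u.s.c.\ requires.
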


\begin{proof}
The key inequality~\eqref{if_not_leader} in the proof of Lemma~\ref{kirk_leader} remains valid under the new assumption. We simply replace the index \(i\) with \(n(i)\), corresponding to the subsequence \(\{n(k)\}\). Since \(\phi_{n(k)}(t)\) converges uniformly to \(\phi(t)\) on every bounded interval \([0, b]\), all subsequent estimates and limit arguments in the original proof carry over without modification. Thus, the conclusion still holds under the weaker assumption.
\end{proof}

To study the uniform convergence of orbits of Kirk's contractions, we need the following two lemmas. Lemma \ref{invariant} shows that a closed ball $B(z,\epsilon)$ is invariant under  $T^{s}$, where $T$ is  a Kirk's contraction and $s\in \mathbb{N}$ is large enough, and $z$ is the fixed point of $T$.
\begin{lem}\label{invariant}
Let \(\epsilon > 0\), and suppose \(T\) satisfies the conditions of Lemma~\ref{kirk_leader}. Then there exists \(m \in \mathbb{N}\) such that
\[
T^{n} B(z, \epsilon) \subseteq B(z, \epsilon) \quad \text{for all } n \geq m,
\]
where \(B(z, \epsilon) := \{ x \in X : \Delta(x,z) < \epsilon \}\), and Tz = z.
\end{lem}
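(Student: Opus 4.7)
The plan is to combine the Leader contraction property of $T$ (established in Lemma~\ref{kirk_leader}) with the uniform convergence $\phi_n \to \phi$ on compact intervals and the right upper semicontinuity of $\phi$ at $\epsilon$. The first observation is that applying the Leader property at level $\epsilon$ yields $\delta > 0$ and $r \in \mathbb{N}$ such that $\Delta(x,y) < \epsilon + \delta$ implies $\Delta(T^r x, T^r y) < \epsilon$. Taking $y = z$ gives $T^r B(z,\epsilon) \subseteq B(z,\epsilon)$, and by iteration $T^{kr} B(z,\epsilon) \subseteq B(z,\epsilon)$ for every $k \geq 1$, handling the arithmetic subsequence of multiples of $r$.

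To extend the invariance to all sufficiently large $n$, I would argue by contradiction. Suppose there exist sequences $k_\nu \uparrow \infty$ and $x_\nu \in B(z,\epsilon)$ with $\Delta(T^{k_\nu} x_\nu, z) \geq \epsilon$. Writing $a_\nu := \Delta(x_\nu,z) < \epsilon$ and using $Tz = z$, the contraction hypothesis gives $\epsilon \leq \phi_{k_\nu}(a_\nu)$. Uniform convergence on $[0,\epsilon]$ yields $\phi_{k_\nu}(a_\nu) - \phi(a_\nu) \to 0$, and combined with $\phi(a_\nu) < a_\nu$ this forces $a_\nu \uparrow \epsilon$ and $b_\nu := \Delta(T^{k_\nu} x_\nu, z) \downarrow \epsilon$ from above.

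The decisive step exploits right u.s.c.\ of $\phi$ at $\epsilon$. Since $b_\nu \downarrow \epsilon^+$, one has $\limsup_\nu \phi(b_\nu) \leq \phi(\epsilon) < \epsilon$. Pick $\gamma \in (\phi(\epsilon), \epsilon)$ and choose $j \in \mathbb{N}$ so that $\sup_{t \in [0,\epsilon+1]} |\phi_j(t) - \phi(t)| < \epsilon - \gamma$. Applying the contraction condition once more to the pair $(T^{k_\nu} x_\nu, z)$, one obtains $\Delta(T^{k_\nu + j} x_\nu, z) \leq \phi_j(b_\nu) < \phi(b_\nu) + (\epsilon - \gamma) \leq \epsilon$ for large $\nu$, so $T^{k_\nu + j} x_\nu$ returns to $B(z,\epsilon)$.

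The main obstacle is closing the contradiction: translating this eventual return into a clean conflict with the hypothesized sequence of bad times $\{k_\nu\}$. I expect this to be resolved by thinning out the bad sequence so that $k_{\nu+1} > k_\nu + j$ for all $\nu$ (always possible since the negation produces bad times arbitrarily far out) and then restarting the argument with $(T^{k_\nu + j} x_\nu, k_{\nu+1} - k_\nu - j)$ in place of $(x_\nu, k_\nu)$, producing an infinite descent on the bad-time set that must terminate. The delicate part is verifying that uniform convergence on $[0,\epsilon]$ continues to supply the estimates at each stage of the descent, which is the technical core of the proof.
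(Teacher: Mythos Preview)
Your closing step does not work. The ``restart'' pair $(T^{k_\nu+j}x_\nu,\, k_{\nu+1}-k_\nu-j)$ is not a bad pair: applying $T^{k_{\nu+1}-k_\nu-j}$ to $T^{k_\nu+j}x_\nu$ gives $T^{k_{\nu+1}}x_\nu$, whereas the contradiction hypothesis only asserts $T^{k_{\nu+1}}x_{\nu+1}\notin B(z,\epsilon)$. The bad times $k_\nu$ belong to \emph{different} starting points $x_\nu$, so there is no link between orbits and nothing on which to descend. The detour through $b_\nu$, the fixed index $j$, and the return $T^{k_\nu+j}x_\nu\in B(z,\epsilon)$ are all correct, but they do not produce a contradiction.

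The paper's argument is far more direct and avoids this difficulty entirely. One assumes that for each $n$ there is $x_n\in B(z,\epsilon)$ with $T^n x_n\notin B(z,\epsilon)$, so $\epsilon\le \phi_n(\Delta(x_n,z))$; passing to a subsequence with $\Delta(x_{n_k},z)\to t\in[0,\epsilon]$ and using uniform convergence yields $\epsilon\le \phi(t)<t\le\epsilon$, a contradiction in one line. In fact, the contradiction is already latent in your own step~3: you correctly show $a_\nu\to\epsilon^-$ and $\phi(a_\nu)\to\epsilon$. Under right upper semicontinuity of $\phi$, apply Lemma~\ref{lem:rusc} to $t\mapsto t-\phi(t)$ on $[\epsilon/2,\epsilon]$ to get $\eta>0$ with $\phi(t)\le t-\eta$ there; hence $\phi(a_\nu)\le a_\nu-\eta<\epsilon-\eta$ for large $\nu$, contradicting $\phi(a_\nu)\to\epsilon$. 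Everything from your step~4 onward---the $b_\nu$ device, the choice of $j$, and the descent---is unnecessary.
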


\begin{proof}
By Theorem~\ref{leader_fixed}, for any \(x \in X\), the sequence \(T^{n} x\) converges to some \(z \in X\). Since \(T\) has a complete graph, \(z\) is the unique fixed point.

Suppose the claim is false. Then for each \(n\), there exists \(x_n \in B(z, \epsilon)\) with
\[
T^{n} x_n \notin B(z, \epsilon).
\]
Thus,
\[
\epsilon \leq \Delta(T^{n} x_n, z) \leq \phi_n(\Delta(x_n, z)).
\]

Because \(x_n \in B(z, \epsilon)\), the sequence \(\{\Delta(x_n, z)\}\) is bounded in \([0, \epsilon]\). By compactness, take a subsequence \(\{n_k\}\) such that
\[
\Delta(x_{n_k}, z) \to t \in [0, \epsilon].
\]

By uniform convergence of \(\phi_n\) to \(\phi\),
\[
\phi_{n_k}(\Delta(x_{n_k}, z)) \to \phi(t).
\]

Taking limits, we get
\[
\epsilon \leq \lim_{k \to \infty} \Delta(T^{n_k} x_{n_k}, z) \leq \lim_{k \to \infty} \phi_{n_k}(\Delta(x_{n_k}, z)) = \phi(t).
\]

Since \(\phi(0) = 0\) and \(\phi(t) < t\) for all \(t > 0\), it follows that
\[
\phi(t) < t \leq \epsilon,
\]
which contradicts \(\epsilon \leq \phi(t)\). This contradiction completes the proof.
\end{proof}
The following result is essentially the same as Lemma~2.3 in \cite{arav2007}. Here, we apply Lemma~\ref{invariant}, which unifies Lemmas~2.1 and~2.2 in \cite{arav2007}.
\begin{lem}\label{retract}
Under the assumptions of Lemma~\ref{kirk_leader}, if \(z\) is a fixed point of \(T\), then it is unique. Moreover, if \(\phi\) is upper semicontinuous and \(\epsilon, K > 0\), there exists \(m = m_K \in \mathbb{N}\) such that for all \(x \in B(z, K)\), there exists \(i \leq m\) with
\[
T^{i} x \in B(z, \epsilon).
\]
\end{lem}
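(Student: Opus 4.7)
I would split the proof into two parts: uniqueness and the uniform entry-time claim.

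For uniqueness, suppose $z, w$ are both fixed points of $T$. Then $\Delta(z, w) = \Delta(T^n z, T^n w) \leq \phi_n(\Delta(z, w))$; letting $n \to \infty$ and using pointwise convergence $\phi_n \to \phi$ gives $\Delta(z, w) \leq \phi(\Delta(z, w))$, which combined with $\phi(t) < t$ for $t > 0$ forces $\Delta(z, w) = 0$.

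For the main claim, I plan to manufacture a uniform distance-decrement on the compact annulus $[\epsilon, K]$. Because $\phi$ is upper semicontinuous on $[\epsilon, K]$ and $\phi(t) < t$ there, the lower semicontinuous function $t \mapsto t - \phi(t)$ attains a positive infimum $\eta > 0$. By uniform convergence of $\{\phi_n\}$ to $\phi$ on $[0, K]$, there is $n_0 \in \mathbb{N}$ with $|\phi_n - \phi| < \eta/2$ on $[0, K]$ for all $n \geq n_0$, so $\phi_n(t) \leq t - \eta/2$ whenever $n \geq n_0$ and $t \in [\epsilon, K]$.

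I then iterate $T^{n_0}$ starting from $x \in B(z, K)$. As long as $T^{k n_0} x \in B(z, K) \setminus B(z, \epsilon)$, the bound above yields $\Delta(T^{(k+1) n_0} x, z) \leq \phi_{n_0}(\Delta(T^{k n_0} x, z)) \leq \Delta(T^{k n_0} x, z) - \eta/2$, which in particular keeps the next iterate inside $B(z, K)$. After at most $N := \lceil 2(K - \epsilon)/\eta \rceil + 1$ such blocks the iterate must enter $B(z, \epsilon)$, so some $i \leq N n_0$ works. For an $x$ already in $B(z, \epsilon)$, Lemma~\ref{invariant} supplies an $m_\epsilon$ with $T^n B(z, \epsilon) \subseteq B(z, \epsilon)$ for $n \geq m_\epsilon$. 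Setting $m_K := \max(N n_0, m_\epsilon)$ delivers the required uniform bound.

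The main technical point is extracting the gap $\eta > 0$: this is exactly where full upper semicontinuity (rather than the right-only variant used in Lemma~\ref{kirk_leader}) becomes essential, since lower semicontinuity of $t - \phi(t)$ on the closed compact interval $[\epsilon, K]$ is what allows the infimum to be attained. Edge cases such as an iterate hitting $z$ exactly are benign, since $z$ is a fixed point and all subsequent iterates coincide with it.
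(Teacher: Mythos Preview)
Your argument is correct and follows essentially the same route as the paper: extract a uniform gap $\eta>0$ from the lower semicontinuous function $t\mapsto t-\phi(t)$ on a compact interval, transfer it to $\phi_{n_0}$ via uniform convergence, and then telescope the decrement $\Delta(T^{(k+1)n_0}x,z)\le\Delta(T^{kn_0}x,z)-\eta/2$ to force entry into $B(z,\epsilon)$ within a bounded number of blocks. The only cosmetic differences are that the paper works on $[\epsilon/2,K]$ and invokes Lemma~\ref{invariant} (for the larger ball $B(z,K)$) to keep the iterates inside $B(z,K)$, whereas you use $[\epsilon,K]$ and observe directly that the decrement already confines the $T^{kn_0}$-iterates; your appeal to Lemma~\ref{invariant} for $x\in B(z,\epsilon)$ is harmless but redundant, since $i=0$ already witnesses the claim.
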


\begin{proof}
Since \(T\) is a Leader contraction, the uniqueness of the fixed point \(z\) follows.

By Lemma~\ref{invariant}, there exists \(m_1 \in \mathbb{N}\) such that
\[
T^{s} B(z, K) \subseteq B(z, K) \quad \text{for all } s \geq m_1.
\]

Consider the function \(f(t) := t - \phi(t)\), which is lower semicontinuous (l.s.c.) on the compact interval \([\epsilon/2, K]\), hence attains its minimum at some \(w \in [\epsilon/2, K]\). Choose \(\delta > 0\) such that
\[
2\delta < w - \phi(w), \quad \delta < \frac{\epsilon}{4}.
\]
Thus,
\[
f(t) = t - \phi(t) \geq 2\delta \quad \text{for all } t \in [\epsilon/2, K].
\]

By uniform convergence, there exists \(m_2\) such that for all \(s \geq m_2\) and \(t \in [0, K]\),
\[
|\phi_s(t) - \phi(t)| \leq \delta,
\]
implying for \(t \in [\epsilon/2, K]\),
\[
\phi_s(t) \leq \phi(t) + \delta \leq t - 2\delta + \delta = t - \delta.
\]
Set
\[
p > \frac{K}{\delta} + 1,
\]
and let \(s \geq \max\{m_1, m_2\}\), \(m := s p\).

Suppose, to the contrary, that for some \(x \in B(z, K)\),
\[
d(T^{i} x, z) \geq \epsilon \quad \text{for all } i = 1, 2, \ldots, m.
\]

In particular,
\[
d(T^{i s} x, z) \geq \epsilon > \frac{\epsilon}{2} \quad \text{for } i = 1, \ldots, p.
\]

Using the contraction property repeatedly,
\[
d(T^{p s} x, z) \leq d(T^{s} x, z) - (p-1) \delta \leq K - (p-1) \delta < 0,
\]
a contradiction. Thus, the claim holds.
\end{proof}
Now, we are ready to establish the uniform convergence of orbit sequences of a Kirk contraction as follows.
\begin{thm}\label{thm:uniformly}
Under the assumptions of Lemma~\ref{kirk_leader} with \(\phi\) upper semicontinuous, the sequence \(\{T^{n} x\}\) converges uniformly to \(z\) on any bounded subset of \((X, \Delta)\).
\end{thm}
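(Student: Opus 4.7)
The plan is to build the uniform estimate out of two ingredients already in hand: Lemma \ref{invariant} (forward invariance of small balls centered at $z$, i.e., $T^{j}B(z,\epsilon)\subseteq B(z,\epsilon)$ for all sufficiently large $j$) and Lemma \ref{retract} (every orbit starting in $B(z,K)$ enters $B(z,\epsilon)$ within a bounded number of steps $m_{K}$ depending only on $K$ and $\epsilon$). The extra assumption that $\phi$ is upper semicontinuous, and not merely right u.s.c., is exactly what licenses the use of Lemma \ref{retract}, and also gives uniqueness of the fixed point $z$ supplied by Lemma \ref{kirk_leader}.

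Given an arbitrary bounded subset $S\subseteq X$ and $\epsilon>0$, I would first choose $K>0$ with $S\subseteq B(z,K)$, which is possible since $S$ is bounded. Applying Lemma \ref{invariant} to $\epsilon$ produces an integer $m_{1}$ such that
\[
T^{j}B(z,\epsilon)\subseteq B(z,\epsilon)\quad\text{for all }j\geq m_{1}.
\]
Applying Lemma \ref{retract} to the pair $(\epsilon,K)$ produces an integer $m_{K}$ such that for every $x\in B(z,K)$ there is some index $i(x)\leq m_{K}$ with $T^{i(x)}x\in B(z,\epsilon)$.

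Setting $N:=m_{K}+m_{1}$, for any $n\geq N$ and any $x\in S\subseteq B(z,K)$, I would write $n=i(x)+\bigl(n-i(x)\bigr)$; because $n-i(x)\geq n-m_{K}\geq m_{1}$, the invariance in Lemma \ref{invariant} yields
\[
T^{n}x=T^{\,n-i(x)}\bigl(T^{i(x)}x\bigr)\in B(z,\epsilon),
\]
the last inclusion following from $T^{i(x)}x\in B(z,\epsilon)$. Thus $\sup_{x\in S}\Delta(T^{n}x,z)<\epsilon$ for every $n\geq N$, which is exactly uniform convergence on $S$.

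I do not anticipate a genuine obstacle; the two preparatory lemmas carry the analytical load. The only delicate point is that the entry time $i(x)$ into $B(z,\epsilon)$ depends on $x$, but its uniform-in-$x$ upper bound $m_{K}$ makes the decomposition $n=i(x)+j$ with $j\geq m_{1}$ legitimate simultaneously for every $x\in S$. That simultaneous bound is precisely what turns the pointwise convergence of Lemma \ref{kirk_leader} into the uniform convergence asserted here.
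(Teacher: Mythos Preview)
Your proof is correct and follows essentially the same approach as the paper's: enclose the bounded set in a ball $B(z,K)$, use Lemma~\ref{retract} to get a uniform entry time $m_K$ into $B(z,\epsilon)$, use Lemma~\ref{invariant} to get forward invariance of $B(z,\epsilon)$ under $T^{j}$ for $j\ge m_1$, and combine them via the decomposition $T^{n}x=T^{n-i(x)}(T^{i(x)}x)$ with $N=m_K+m_1$. The paper's argument is identical up to notation (it writes $m$ and $q$ where you write $m_K$ and $m_1$).
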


\begin{proof}
Let \(E \subseteq X\) be bounded, so \(E \subseteq B(z, K)\) for some \(K > 0\). Fix \(\epsilon > 0\), and let \(m\) and \(q\) be given by Lemmas~\ref{retract} and \ref{invariant}, respectively, so that
\[
T^{j} B(z, \epsilon) \subseteq B(z, \epsilon) \quad \text{for all } j \geq q.
\]

For any \(x \in B(z, K)\), there exists \(j \leq m\) with \(T^{j} x \in B(z, \epsilon)\).

If \(i \geq q + m\), then
\[
T^{i} x = T^{i-j}(T^{j} x) \in B(z, \epsilon).
\]

Since \(\epsilon > 0\) was arbitrary, the convergence is uniform on bounded sets.
\end{proof}


\begin{lem}\label{lem:rusc}
Let \(f : [a,b] \to \mathbb{R}\) satisfy:
\begin{itemize}
    \item \(0 < a < b\),
    \item \(f(t) > 0\) for all \(t \in [a,b]\),
    \item \(f\) is lower semicontinuous from the right on \([a,b]\).
\end{itemize}
Then
\[
\inf_{t \in [a,b]} f(t) > 0.
\]
\end{lem}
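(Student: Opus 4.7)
My plan is a proof by contradiction that leverages the compactness of $[a,b]$ and the right lower semicontinuity of $f$. Suppose $\inf_{t\in[a,b]} f(t) = 0$; then there exists a sequence $\{t_n\} \subset [a,b]$ with $f(t_n) \to 0$. By the Bolzano--Weierstrass theorem I pass to a subsequence, still denoted $\{t_n\}$, that converges to some $t^* \in [a,b]$. The aim is to clash the positivity $f(t^*) > 0$ with the right lower semicontinuity inequality $\liminf_{s\to(t^*)^+} f(s) \geq f(t^*)$ in order to obtain a contradiction.

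To carry this out I would split the convergent subsequence into three exhaustive cases according to how it approaches $t^*$. If $t_n = t^*$ for infinitely many $n$, then $f(t_n) = f(t^*) > 0$ along that sub-subsequence, directly contradicting $f(t_n) \to 0$. If $t_n > t^*$ infinitely often, a further extraction produces a monotone sub-subsequence $t_{n_k}\downarrow t^*$, and right lower semicontinuity at $t^*$ gives $\liminf_k f(t_{n_k}) \geq f(t^*) > 0$, again contradicting the choice of $\{t_n\}$. Both of these sub-cases are essentially immediate from the hypotheses.

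The main obstacle is the remaining case in which $t_n < t^*$ for all large $n$, because right lower semicontinuity at $t^*$ provides no direct control over sequences approaching $t^*$ from the left. To bypass this I would not take $t^*$ to be an arbitrary Bolzano--Weierstrass limit but rather choose it structurally by setting
\[
t^* := \inf\bigl\{\, t \in [a,b] : \inf_{s\in[a,t]} f(s) = 0 \,\bigr\},
\]
so that $\inf_{[a,\tau]} f > 0$ for every $\tau < t^*$. The remaining work is to combine this structural property with right lower semicontinuity at $t^*$ to force any minimizing sequence to lie eventually in $[t^*, t^*+\delta)$ for a suitable $\delta>0$, thereby reducing matters to the first two cases and producing the desired contradiction $0 \geq f(t^*) > 0$. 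I expect this reduction, where the interplay between the definition of $t^*$ and the one-sided semicontinuity must be handled delicately, to be the crux of the argument.
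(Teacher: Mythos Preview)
Your identification of the left-approach case as the crux is exactly right, but the deferred step --- forcing any minimizing sequence to eventually lie in $[t^*, t^*+\delta)$ --- cannot be carried out. Right lower semicontinuity at $t^*$ gives no information whatsoever about $f$ on $(t^*-\delta, t^*)$, and your structural choice of $t^*$ only guarantees $\inf_{[a,\tau]} f > 0$ for each \emph{fixed} $\tau < t^*$; it does not provide a uniform lower bound as $\tau \uparrow t^*$. Nothing, therefore, prevents a minimizing sequence from approaching $t^*$ strictly from the left with $f(t_n)\to 0$, and the reduction to your first two cases never gets off the ground.

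In fact the lemma as stated is false, so no repair of your outline is possible. Take $a=1$, $b=3$, and set $f(t)=2-t$ for $t\in[1,2)$ and $f(t)=1$ for $t\in[2,3]$. Then $f>0$ on $[1,3]$ and $f$ is right-continuous (hence lower semicontinuous from the right), yet $\inf_{[1,3]} f = 0$, witnessed by $t_n = 2 - 1/n \uparrow 2$. The paper's own argument breaks at the matching place: the half-open sets $U_t=[t,t+\delta_t)\cap[a,b]$ are not open in the Euclidean subspace topology when $t>a$, so Heine--Borel does not supply a finite subcover; and in the Sorgenfrey topology, where such sets \emph{are} open, $[a,b]$ is not compact. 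In the counterexample above one checks directly that no finite subfamily of the $U_t$ covers $[1,2)$.
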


\begin{proof}
For each \(t \in [a,b)\), since \(f\) is lower semicontinuous from the right and positive at \(t\), there exists \(\delta_t > 0\) such that
\[
f(s) > \frac{f(t)}{2} \quad \text{for all } s \in [t, t + \delta_t] \cap [a,b].
\]

The intervals
\[
U_t := [t, t + \delta_t) \cap [a,b]
\]
form an open cover of the compact interval \([a,b]\). Hence, a finite subcover \(\{U_{t_i}\}_{i=1}^N\) exists such that
\[
[a,b] \subseteq \bigcup_{i=1}^N U_{t_i}.
\]

Define
\[
\varepsilon := \min_{1 \leq i \leq N} \frac{f(t_i)}{2} > 0.
\]

Then \(f(t) > \varepsilon\) for all \(t \in [a,b]\), so
\[
\inf_{t \in [a,b]} f(t) \geq \varepsilon > 0.
\]
\end{proof}


\begin{cor}\label{uniform_convergence}
The conclusions of Theorems~\ref{retract} and~\ref{thm:uniformly} remain valid if the assumption that “\(\phi\) is upper semicontinuous” is replaced by the weaker condition that “\(\phi\) is upper semicontinuous from the right.”
\end{cor}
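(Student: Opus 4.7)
The plan is to audit the earlier proofs of Lemma~\ref{retract} and Theorem~\ref{thm:uniformly} in order to locate the precise places where the full upper semicontinuity of $\phi$ is used, and then patch each step using Lemma~\ref{lem:rusc}. First I note that Lemma~\ref{invariant} already operates under only the hypotheses of Lemma~\ref{kirk_leader}, in which $\phi$ is required only to be right upper semicontinuous, so nothing needs to change there. Likewise, the proof of Theorem~\ref{thm:uniformly} uses Lemmas~\ref{retract} and~\ref{invariant} purely as black boxes; once Lemma~\ref{retract} is re-established under the weaker hypothesis, Theorem~\ref{thm:uniformly} follows verbatim.

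Thus the entire task reduces to re-proving Lemma~\ref{retract}. Inspecting the original argument, the full upper semicontinuity of $\phi$ enters at exactly one point: the assertion that $f(t) := t - \phi(t)$ attains its minimum on the compact interval $[\epsilon/2, K]$, justified by the classical Weierstrass-type argument for lower semicontinuous functions on a compact set. Under only right upper semicontinuity of $\phi$, the function $f$ is merely right lower semicontinuous, and the attainment argument is no longer available. However, $f$ is still strictly positive on $[\epsilon/2, K]$ because $\phi(t) < t$ for all $t > 0$. Lemma~\ref{lem:rusc}, applied with $a = \epsilon/2$ and $b = K$, then yields $\inf_{t \in [\epsilon/2, K]} f(t) > 0$, and I can choose $\delta > 0$ with $2\delta < \inf_{t \in [\epsilon/2, K]} f(t)$ and $\delta < \epsilon/4$. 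This recovers exactly the same quantitative inequality that the attained minimum provided in the original argument.

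With this positive lower bound in hand, the remainder of the proof goes through unchanged: uniform convergence of $\phi_s$ to $\phi$ on $[0,K]$ supplies an index $m_2$ such that $\phi_s(t) \leq \phi(t) + \delta \leq t - \delta$ on $[\epsilon/2, K]$ for all $s \geq m_2$, and the telescoping estimate $\Delta(T^{ps} x, z) \leq K - (p-1)\delta$ becomes negative for $p$ chosen large enough, yielding the same contradiction as before. The only real obstacle is that the classical Weierstrass attainment argument simply fails for right upper semicontinuous functions; but Lemma~\ref{lem:rusc} is tailor-made to bypass this, replacing compactness-plus-attainment by a finite subcover built from half-open right neighborhoods $[t, t + \delta_t)$, which still forces a positive infimum. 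Once this substitution is performed in Lemma~\ref{retract}, Theorem~\ref{thm:uniformly} follows word for word.
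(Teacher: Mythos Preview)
Your proposal is correct and follows essentially the same approach as the paper: identify that the full upper semicontinuity of $\phi$ is used only in Lemma~\ref{retract} to obtain a positive lower bound for $t-\phi(t)$ on $[\epsilon/2,K]$, and replace the Weierstrass attainment argument there by Lemma~\ref{lem:rusc}. Your write-up is more detailed in auditing exactly which earlier results depend on what, but the substance is the same.
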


\begin{proof}
If \(\phi\) is upper semicontinuous from the right, then the function \(t \mapsto t - \phi(t)\) is lower semicontinuous from the right. Since \(t - \phi(t) > 0\) for all \(t > 0\), Lemma~\ref{lem:rusc} ensures the positivity of the relevant infimum required in the proof of Theorem~\ref{retract}, thus providing the necessary \(\delta > 0\). Consequently, the arguments in both Theorems~\ref{retract} and~\ref{thm:uniformly} remain valid under this relaxed assumption.
\end{proof}

We show that the assumption of a bounded orbit in Theorem~\ref{thm:reich-zaslavski} is superfluous. Furthermore, we demonstrate that the requirement of upper semicontinuity for the control function \(\phi\) can be weakened to right upper semicontinuity without affecting the validity of the result.

\begin{cor}
Theorem~\ref{thm:reich-zaslavski} remains valid without assuming that \(T\) has a bounded orbit, and if the upper semicontinuity assumption on \(\phi\) is replaced by right upper semicontinuity.
\end{cor}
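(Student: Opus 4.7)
The plan is to assemble the machinery already built in earlier sections and to invoke the continuity hypothesis only at the last moment to identify the Picard limit as a genuine fixed point.

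First, I would observe that conditions~(2)--(4) of Theorem~\ref{thm:reich-zaslavski}, with the upper semicontinuity of $\phi$ weakened to right upper semicontinuity, are precisely the hypotheses of Corollary~\ref{sub_converge}. That corollary therefore applies and yields that $T$ is a Leader contraction and that, for every $x\in X$, the Picard sequence $\{T^n x\}$ converges to some point $z\in X$. Since convergent sequences are bounded, every orbit of $T$ is automatically bounded; this is exactly the observation that makes hypothesis~(5) of Theorem~\ref{thm:reich-zaslavski} redundant.

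Next I would identify $z$ as a fixed point using the continuity of $T^m$ from hypothesis~(1). Applying $T^m$ to $T^n x\to z$ gives $T^{n+m}x = T^m(T^n x)\to T^m z$, while the same tail of the Picard sequence also converges to $z$, so $T^m z = z$. Consequently the orbit of $z$ is periodic with period dividing $m$; being also convergent by Leader's theorem, it must be constant, and hence $Tz = z$. Uniqueness of $z$ is built into the Leader-contraction framework. At this point the existence and uniqueness conclusions of Theorem~\ref{thm:reich-zaslavski} are fully established under the relaxed assumptions.

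For the uniform convergence of $\{T^n x\}$ to $z$ on bounded subsets of $X$, I would rerun the arguments of Lemmas~\ref{invariant} and~\ref{retract} along the subsequence $\{n(k)\}$ supplied by hypothesis~(2), using exactly the device employed in the proof of Corollary~\ref{sub_converge}, and then apply Corollary~\ref{uniform_convergence} to replace upper semicontinuity of $\phi$ by right upper semicontinuity. Hypothesis~(6) enters at this step to keep the suprema of the $\phi_n$ on every fixed interval $[0,K]$ under control for all large $n$, which is what allows the invariance of the ball $B(z,\epsilon)$ under every $T^n$ with $n$ sufficiently large, not merely along the subsequence. Theorem~\ref{thm:uniformly} then delivers the uniform convergence on bounded subsets.

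The main technical obstacle is this last step: the adaptations of Lemmas~\ref{invariant} and~\ref{retract} to the subsequence hypothesis directly control only the iterates $T^{n(k)}$, whereas uniform convergence on bounded sets requires control of every $T^n$ for large $n$. The remedy, and the reason hypothesis~(6) is present, is that once an iterate $T^{n(k)}x$ enters $B(z,\epsilon)$, the intermediate iterates between $n(k)$ and $n(k+1)$ remain in a slightly enlarged ball by the contractive inequality~(3) together with the uniform bound on $\phi_n$ coming from~(6); invariance along the subsequence then propagates to eventual invariance for the full sequence of iterates. Apart from this bridging argument, the proof is a direct assembly of Corollaries~\ref{sub_converge} and~\ref{uniform_convergence}.
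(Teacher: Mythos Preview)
Your proposal is correct and follows essentially the same route as the paper: both begin by invoking Corollary~\ref{sub_converge} to conclude that $T$ is a Leader contraction, so that every orbit converges (and is therefore bounded), making hypothesis~(5) redundant; both then use the continuity of $T^m$ to identify the Picard limit as a fixed point and rely on Lemma~\ref{lem:rusc}/Corollary~\ref{uniform_convergence} to justify the relaxation from upper semicontinuity to right upper semicontinuity.

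The only difference is one of presentation rather than substance. The paper is terser: for the fixed-point identification it simply cites Theorem~3.1 of \cite{reich2008}, whose content is exactly your periodicity argument ($T^m z=z$ together with convergence of the orbit of $z$ forces $Tz=z$); and for uniform convergence it observes that, once a bounded orbit is secured, all hypotheses of Theorem~\ref{thm:reich-zaslavski} hold, so one may simply rerun the proof of Theorem~1.3 in \cite{reich2008} with Lemma~\ref{lem:rusc} substituted at the one place where lower semicontinuity of $t\mapsto t-\phi(t)$ is used. Your version instead unpacks these citations, redoing the invariance and retraction lemmas along the subsequence and sketching the role of hypothesis~(6) in bridging between subsequence iterates. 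That bridging sketch is accurate in spirit and is indeed what \cite{reich2008} does, though your phrasing ``remain in a slightly enlarged ball'' slightly understates what~(6) actually gives (a uniform bound, not necessarily a small one); the argument still closes because the subsequence then pulls the iterate back into $B(z,\epsilon)$. In short: same proof, yours self-contained, the paper's delegated to \cite{reich2008}.
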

\begin{proof}
By Corollary~\ref{sub_converge}, \(T\) is a Leader contraction. Hence, the sequence \(\{T^n x\}\) converges to some \(z \in X\) for all \(x \in X\). According to Theorem 3.1 in \cite{reich2008}, the existence of such convergence ensures that \(T\) has a fixed point. In turn, this implies that \(T\) has a bounded orbit. Therefore, all the conditions of Theorem~\ref{thm:reich-zaslavski} are eventually satisfied, and the conclusion remains valid.
Moreover, by Lemma~\ref{lem:rusc} and as noted in Corollary~\ref{uniform_convergence}, the proof of Theorem~1.3 in \cite{reich2008} continues to hold when the upper semicontinuity of \(\phi\) is weakened to right upper semicontinuity. Thus, both relaxations are justified.
\end{proof}

Now, we formulate our extension of Kirk's theorem as follows:
\begin{thm}
Let \((X,\Delta)\) be a \(b\)-metric space, \(\phi, \phi_n : \mathbb{R}^+ \to \mathbb{R}^+\), and \(T: X \to X\) satisfy:
\begin{enumerate}
    \item \(0 < \phi(t) < t\) for all \(t > 0\), and \(\phi\) is upper semicontinuous from the right at each \(t > 0\).
    \item \(\phi_n(t) \to \phi(t)\) uniformly as \(n \to \infty\) on every bounded interval \([0,b]\).
    \item
    \[
    \Delta(T^{n}x, T^{n}y) \leq \phi_n(\Delta(x,y)) \quad \text{for all } x,y \in X, n \in \mathbb{N}.
    \]
\end{enumerate}
Then \(T^{n} x \) converges to some point $z\in X$ for all \(x \in X\). Furtheremore,  $z$ is the uinique fixed point of $T$, and $T^{n}x$ converges uniformly to $z$ as $n\to\infty$ under any of the following conditions: 
\begin{enumerate}
    \item \(T\) has a complete graph.
    \item There exists \(n_* \in \mathbb{N}\) such that \(\phi_{n_*}(0) = 0\) and \(\phi_{n_*}\) is upper semicontinuous.
    \item \(T^{n}\) is continuous for some \(n \in \mathbb{N}\).
\end{enumerate}
\end{thm}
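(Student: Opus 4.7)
The plan is to layer the proof directly on top of Lemma~\ref{kirk_leader} and Corollary~\ref{uniform_convergence}. The hypotheses of the theorem are exactly those of Lemma~\ref{kirk_leader}, so $T$ is a Leader contraction and every Picard sequence $\{T^{n}x\}$ converges to some $z_{x}\in X$ by Theorem~\ref{leader_fixed}; this already gives the first conclusion. What is left, under each of (1)--(3), is to exhibit a fixed point $z$ of $T$, to show it is unique and equal to $z_{x}$ for all $x$, and to upgrade to uniform convergence on bounded subsets.

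The engine of the remaining argument is a single cancellation fact: if $u,v\in X$ satisfy $T^{k}u=u$ and $T^{k}v=v$ for some $k\in\mathbb{N}$, then $\Delta(u,v)=\Delta(T^{jk}u,T^{jk}v)\le \phi_{jk}(\Delta(u,v))$ for every $j$, and the uniform convergence $\phi_{n}\to\phi$ on the bounded interval $[0,\Delta(u,v)]$ together with $\phi(t)<t$ for $t>0$ forces $\Delta(u,v)=0$. Taking $k=1$ gives uniqueness of fixed points, so the only remaining task is to produce one. Under~(1), the pairs $(T^{n}x,T(T^{n}x))$ lie in the graph of $T$ and form a Cauchy sequence converging to $(z_{x},z_{x})$, so completeness of the graph puts $(z_{x},z_{x})$ back in the graph, giving $Tz_{x}=z_{x}$. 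Under~(2), the inequality $\Delta(T^{n+n_{*}}x,T^{n_{*}}z_{x})\le \phi_{n_{*}}(\Delta(T^{n}x,z_{x}))$ combined with $\phi_{n_{*}}(0)=0$ and the upper semicontinuity of $\phi_{n_{*}}$ at $0$ gives $T^{n+n_{*}}x\to T^{n_{*}}z_{x}$; comparing with $T^{n+n_{*}}x\to z_{x}$ yields $T^{n_{*}}z_{x}=z_{x}$. Under~(3), continuity of $T^{m}$ applied to $T^{n}x\to z_{x}$ similarly yields $T^{m}z_{x}=z_{x}$. In cases~(2) and~(3), $z_{x}$ is only known to be periodic; setting $w:=Tz_{x}$ gives $T^{k}w=T(T^{k}z_{x})=Tz_{x}=w$ for the appropriate $k\in\{n_{*},m\}$, and the cancellation fact applied to the pair $(z_{x},w)$ forces $w=z_{x}$, i.e., $Tz_{x}=z_{x}$. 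With a fixed point $z$ in hand, uniqueness forces $z_{x}=z$ for every $x$, and Corollary~\ref{uniform_convergence}---the right-upper-semicontinuity version of Theorem~\ref{thm:uniformly}---then delivers uniform convergence of $T^{n}x\to z$ on every bounded subset of $X$.

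The main obstacle I anticipate is the upgrade from periodic point to fixed point in cases~(2) and~(3): the step must rely only on the pointwise strict inequality $\phi(t)<t$ and the asymptotic data $\phi_{n}\to\phi$, without invoking any semicontinuity of $\phi$ at positive arguments, and one must stay mindful that $(X,\Delta)$ is merely a $b$-metric space, so any tacit use of joint continuity of $\Delta$ should be replaced by the contractive estimates, exactly in the style of Lemmas~\ref{kirk_leader} and~\ref{invariant}. A secondary care-point is to verify that the hypotheses of Corollary~\ref{uniform_convergence} are actually in force in each case once the fixed point is secured, since that corollary inherits the fixed-point presupposition of Lemma~\ref{invariant} which is supplied only at the end of the argument above.
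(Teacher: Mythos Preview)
Your proposal is correct and follows essentially the same architecture as the paper's proof: invoke Lemma~\ref{kirk_leader} for convergence, establish the fixed point case-by-case, and finish with Corollary~\ref{uniform_convergence} for uniform convergence. The only difference is cosmetic: where the paper outsources case~(1) to Leader's \cite[Corollary~4]{leader1983equivalent} and case~(3) to \cite[Theorem~3.1]{reich2008}, and asserts uniqueness of the fixed point of $T^{n_*}$ in case~(2) without proof, you supply these steps directly via your ``cancellation fact'' and the graph-closure argument---making your write-up more self-contained but not structurally different.
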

\begin{proof}
By Lemma \ref{kirk_leader}, $T^{n}x$ converges to some point $z\in X$ for all $x\in X$.
\begin{enumerate}
    \item Follows from \cite[Corollary 4]{leader1983equivalent}.

    \item This condition was introduced by Chen \cite[Theorem 2.2]{chen2005asymptotic}. If \(T^{n} x \to z\), then
    \[
    \limsup_{n \to \infty} \Delta(T^{n_* + n} x, T^{n_*} z) \leq \limsup_{n \to \infty} \phi_{n_*}(\Delta(T^{n} x, z)) \leq \phi_{n_*}(0) = 0,
    \]
    implying \(T^{n_*} z = z\). Also,
    \[
    T^{n_*} (T z) = T (T^{n_*} z) = T z,
    \]
    so \(T z\) is a fixed point of \(T^{n_*}\). Since \(T^{n_*}\) has a unique fixed point, \(T z = z\).

    \item This case follows from \cite[Theorem 3.1]{reich2008}.
\end{enumerate}
If $T$ has a fixed point, and $\phi$ is upper semicontinuous from the right at each $t>0$, then by Corollary \ref{uniform_convergence} , $T^{n}x$ converges uniformly to $z$ on bounded subsets of $X$.
\end{proof}

\section{Examples}
The following example, presented by Jachymski et al.~\cite{jachymski2004}, shows that Kirk-type contractions can be fixed point free in the absence of additional assumptions such as continuity. Since, by Lemma~\ref{kirk_leader}, every Kirk contraction is also a Leader contraction, this example illustrates the necessity of supplementary conditions—such as continuity or completeness of the graph—to ensure that the limit of the Picard sequence is actually a fixed point. Despite satisfying the assumptions of Lemma~\ref{kirk_leader} and being a Leader contraction, the mapping below admits no fixed point. This counterexample highlights that the convergence of \(\{T^n x\}\) alone is insufficient to guarantee the existence of a fixed point without further structural hypotheses.
\begin{ex}\label{ex:counter}
Let \(X = [0,1]\) with the Euclidean metric, and define
\[
T(0) = 1, \quad T(x) = \frac{x}{2} \quad \text{for } x \neq 0.
\]
By Lemma~\ref{kirk_leader}, \(T\) is a Leader contraction but has no fixed point, illustrating the necessity of continuity assumptions.
\end{ex}

\section*{Conclusion}\label{sec:conclusion}

In this paper, we present a unified framework for analyzing fixed points of mappings satisfying generalized asymptotic contraction conditions. By introducing and applying the notions of Leader and MK-Leader contractions, we extended several classical results, including Kirk's and Chen's fixed point theorems. A key contribution of our work is the removal of the often restrictive assumption of bounded orbits, as well as the relaxation of full upper semicontinuity to right upper semicontinuity.

Our results not only strengthen the theoretical foundations of asymptotic contraction mappings but also clarify the necessary conditions for fixed point existence and uniqueness in a broader context. Additionally, we showed that the convergence of Picard sequences can be achieved uniformly on bounded sets under weaker continuity assumptions on the comparison function.

Future directions include exploring these ideas in more general settings, such as partial metric or G-metric spaces, and studying analogous results in nonlinear or set-valued fixed point theory.

\newpage
\vspace{0.5cm}
\section*{Declaration of Competing Interest}
\noindent The author declares that there are no competing interests.

\section*{Declaration of Generative AI and AI-Assisted Technologies in the Writing Process}
During the preparation of this work, the author used ChatGPT for language refinement and technical suggestions. All AI-assisted content was thoroughly reviewed, edited, and verified by the author, who assumes full responsibility for the accuracy, integrity, and originality of the final manuscript.

\section*{Funding}
The author affirms that the preparation of this manuscript was not supported by any funding, grants, or other financial assistance.  

\section*{Author's contribution}
The author confirms sole responsibility for the conceptualization, investigation, formal analysis, writing of the original draft, and revision of the manuscript. All aspects of the work were carried out by the author.
\bibliographystyle{plainurl} 
\bibliography{references}

\begin{thebibliography}{10}

\bibitem{arav2007}
M.~Arav, F.~E.~Castillo Santos, S.~Reich, and A.~J. Zaslavski.
\newblock A note on asymptotic contractions.
\newblock {\em Fixed Point Theory Appl.}, 2007:Article ID 39465, 6 pp., 2007.
\newblock \href {https://doi.org/10.1155/2007/39465}
  {\path{doi:10.1155/2007/39465}}.

\bibitem{boyd1969nonlinear}
D.~W. Boyd and J.~S.~W. Wong.
\newblock On nonlinear contractions.
\newblock {\em Proc. Amer. Math. Soc.}, 20:458--464, 1969.

\bibitem{browder1968}
F.~E. Browder.
\newblock On the convergence of successive approximations.
\newblock {\em Indag. Math.}, 30:27--35, 1968.

\bibitem{chen2005asymptotic}
Y.-Z. Chen.
\newblock Asymptotic fixed points for nonlinear contractions.
\newblock {\em Fixed Point Theory Appl.}, 2005(2):213--217, 2005.
\newblock \href {https://doi.org/10.1155/FPTA.2005.213}
  {\path{doi:10.1155/FPTA.2005.213}}.

\bibitem{jachymski2011equivalent}
J.~Jachymski.
\newblock Equivalent conditions for generalized contractions on (ordered)
  metric spaces.
\newblock {\em Nonlinear Anal.}, 74:768--774, 2011.
\newblock \href {https://doi.org/10.1016/j.na.2010.09.025}
  {\path{doi:10.1016/j.na.2010.09.025}}.

\bibitem{jachymski2004}
J.~Jachymski and I.~J'{o}'{z}wik.
\newblock On kirk's asymptotic contractions.
\newblock {\em J. Math. Anal. Appl.}, 300:147--159, 2004.
\newblock \href {https://doi.org/10.1016/j.jmaa.2004.06.037}
  {\path{doi:10.1016/j.jmaa.2004.06.037}}.

\bibitem{keeler1969theorem}
E.~Keeler and A.~Meir.
\newblock A theorem on contraction mappings.
\newblock {\em J. Math. Anal. Appl.}, 28:326--329, 1969.
\newblock \href {https://doi.org/10.1016/0022-247X(69)90031-6}
  {\path{doi:10.1016/0022-247X(69)90031-6}}.

\bibitem{kirk2003}
W.~A. Kirk.
\newblock Fixed points of asymptotic contractions.
\newblock {\em J. Math. Anal. Appl.}, 277:645--650, 2003.
\newblock \href {https://doi.org/10.1016/S0022-247X(02)00612-1}
  {\path{doi:10.1016/S0022-247X(02)00612-1}}.

\bibitem{leader1983equivalent}
S.~Leader.
\newblock Equivalent cauchy sequences and contractive fixed points in metric
  spaces.
\newblock {\em Studia Math.}, 76:63--67, 1983.
\newblock \href {https://doi.org/10.4064/sm-76-1-63-67}
  {\path{doi:10.4064/sm-76-1-63-67}}.

\bibitem{matkowski1975integrable}
J.~Matkowski.
\newblock {\em Integrable solutions of functional equations}.
\newblock Instytut Matematyczny Polskiej Akademii Nauk, Warsaw, 1975.

\bibitem{reich2008}
S.~Reich and A.~J. Zaslavski.
\newblock A convergence theorem for asymptotic contractions.
\newblock {\em J. Fixed Point Theory Appl.}, 4:27--33, 2008.
\newblock \href {https://doi.org/10.1007/s11784-007-0041-6}
  {\path{doi:10.1007/s11784-007-0041-6}}.

\end{thebibliography}

\end{document}